\DeclareMathOperator{\Hom}{Hom}
\DeclareMathOperator{\HH}{HH}
\DeclareMathOperator{\End}{End}
\DeclareMathOperator{\ext}{ext}
\DeclareMathOperator{\Ext}{Ext}
\DeclareMathOperator{\Def}{Def}
\DeclareMathOperator{\Spec}{Spec}
\DeclareMathOperator{\Simp}{Simp}
\DeclareMathOperator{\Id}{Id}
\DeclareMathOperator{\Inner}{Inner}
\DeclareMathOperator{\Der}{Der}
\DeclareMathOperator{\Ker}{Ker}
\DeclareMathOperator{\Sets}{Sets}
\DeclareMathOperator{\Ima}{Im}
\DeclareMathOperator{\ob}{ob}
\DeclareMathOperator{\rad}{rad}
\DeclareMathOperator{\uc}{\underline{c}}
\DeclareMathOperator{\ut}{\underline{t}}
\DeclareMathOperator{\uu}{\underline{u}}
\DeclareMathOperator{\ad}{ad}
\define\s#1{\mathcal #1}
\newtheorem{proposition}{Proposition}
\newtheorem{lemma}{Lemma}
\newtheorem{definition}{Definition}
\newtheorem{example}{Example}
\title{Deformation of Diagrams}
\begin{document}
\maketitle

\begin{abstract}

In this this paper we introduce entanglement among the points in a non-commutative scheme, in addition to the tangent directions. A diagram of $A$-modules is a pair $\uc=(|\uc|,\Gamma)$ where $|\uc|=\{V_1,\dots,V_r\}$ is a set of $A$-modules, and $\Gamma=\{\gamma_{ij}(l)\}$ is a set of $A$-module homomorphisms $\gamma_{ij}(l):V_i\rightarrow V_j$, seen as the $0$'th order tangent directions. This concludes the discussion on non-commutative schemes by defining the deformation theory for diagrams, making these the fundamental points of the non-commutative algebraic geometry, which means that the construction of non-commutative schemes is a closure operation. Two simple examples of the theory are given: The space of a line and a point, which is a non-commutative but untangled example, and the space of a line and a point on the line, in which the condition of the point on the line gives an entanglement between the point and the line.

\end{abstract}

\section{Introduction}

In the article \cite{Laudal03} Laudal defines the non-commutative deformation functor $\Def_V:a_r\rightarrow\Sets$, see also Eriksen \cite{Eriksen03}. Here $a_r$ is the category of $r$-pointed, Artinian $k$-algebras $S$ fitting into the diagram $$\xymatrix{k^r\ar[r]\ar[dr]_{\Id}&S\ar[d]^{\rho}\\&k^r.}$$ In $\cite{Siqveland111}$, we define a non-commutative scheme theory, generalizing the commutative one in the geometric situation: Let $A$ be a $k$-algebra, $k$ algebraically closed of characteristic $0$, not necessarily commutative. Let $M$ be a simple (right) $A$-module, and let $\frak m_M$ be the corresponding (right) ideal. $A$ is called geometric if $0=\rad(A)^\infty=\bigcap_{\underset{n\geq 1}{M\in\Simp(A)}}\frak m_M^n$.
In $\cite{Laudal03}$, Laudal proves that a pro-representing hull for the non-commutative deformation functor of $V=\{V_1,\dots,V_r\}$ exists when $V$ is a family of finite dimensional (right or left) $A$-modules. This is a $k$-algebra $\hat{H}=(\hat{H}_{ij})_{1\leq i,j\leq r}$ in the pro-category $\hat{a}_r$ together with a pro-versal (also called mini-versal) family $$ A\overset\iota\rightarrow(\hat{H}_{ij}\otimes_k\Hom_k(V_i,V_j))=\hat{\mathcal O}_V.$$ First of all, the property of $A$ being geometric assures that the pro-versal morphism $\iota$ is injective. Secondly, $\hat{\mathcal O}_V\twoheadrightarrow\oplus_{i=1}^r\End_k(V_i,V_i)$, and it is known that this surjection implies that, as sets, $\Simp(\hat{\mathcal O}_V)=V$. Thus the sub $k$-algebra $\mathcal O_V\subseteq\hat{\mathcal O}_V$ generated by the image of the generators of $A$ and the inverses of the generated elements not in any corresponding maximal ideal is the localization of $A$ in $V$: It is a fractional $k$-algebra of a finitely generated $k$-algebra, and the only simple modules are the modules in $V$ (or equivalently, the only maximal ideals are the maximal ideals corresponding to the modules in $V$).

On the set $\Simp(A)$ we now pose the following saturated Zariski topology: First of all, the Zariski topology is the topology generated by the open base, over $f\in A$, $D(f)=\{V\in\Simp(A)|\rho(f)\in\End_k(V)\text{ is injective}\},$ where $\rho$ is the structure morphism. We let the saturation relation be the equivalence relation generated by the condition that $V_i$ and $V_j$ are related if $\Ext^1_A(V_i,V_j)\neq 0$. This means that an open subset is saturated with all related points, and it is straight forward to prove that this gives a topology. It should also be mentioned that the saturated topology is introduced to ease the notation.

Just as in commutative situation, we define a sheaf of rings, the structure sheaf, on $\Simp(A)$ by

$$\mathcal O(U)=\underset{\underset{\uc\subseteq U}\longleftarrow}\lim \mathcal O_{\uc},$$ where the limit is taken over equivalence classes $\uc$ with respect to the equivalence relation above. Writing out this definition, we see that it is a true generalization of the definition given in Hartshorne \cite{Hartshorne77} for commutative schemes.

Inspired by quantum mechanics, we need to study entangled systems. The points in moduli (the simple modules) is most frequently entangled by not just the tangent directions and higher order momenta, but also directly. This means that the equivalence relation above should include a zero'th derivative, that is elements in $\Hom_A(V_i,V_j)$. So, we define a diagram as a pair $\uc=(|\uc|,\Gamma)$ where $|\uc|=\{V_1,\dots,V_r\}$ is a set of $A$-modules, and $\Gamma=\{\gamma_{ij}(l)\}$ is a set of $A$-module homomorphisms $\gamma_{ij}:V_i\rightarrow V_j,$ for example (where the arrows not necessarily commutes):
$$\xymatrix{c_1\ar[rr]^{\gamma_{12}}\ar[dr]_{\gamma_{13}}&&c_2\ar[dl]^{\gamma_{23}}\\&c_3.&}$$

Extending the equivalence relation demands a generalization of the category $a_r$ and its deformation functor. Together with a couple of more or less trivial examples, this is the main result of the text.

\section{Algebras over $k^r$ and their geometry}

In the commutative algebraic geometry the standard $k$-algebra is the free polynomial algebra $S=k[x_1,\dots,x_d]$ over an algebraically closed field $k$. The reason for this, vied algebraically, is that it is the local model for the charts $\mathbb C^d$ in ordinary differential geometry. Its geometry is obvious: The points are the maximal ideals $(x_1-a_1,\dots,x_d-a_d)$ corresponding to the points $(a_1,\dots,a_d)$. The local charts in differential geometry, modeling our real (or in fact complex) world, is then substituted by the affine algebraic varieties. Those are the set of maximal ideals in the quotient algebras $S/\frak a$ where $\frak a$ is the ideal generated by $r$ polynomials $f_1,\dots,f_r$ in $S$ (as $S$ is noetherian), i.e. $\frak a=(f_1,...,f_r).$ These maximal ideals $\frak m$ necessarily contains $\frak a$, so that $\frak m=(x_1-a_1,\dots,x_d-a_d)$, where $f_1(a_1,\dots,a_d)=\dots=f_r(a_1,\dots,a_d)=0.$ In differential geometry, the topology is the topology that makes holomorphic functions continuous, that is the ordinary Euclidean topology. In algebraic geometry, as we said, our functions are the polynomials in $S$, and so our topology should be the one that makes the polynomials continuous. It is well known that the Zariski topology is the coarsest possible for that matter.

What is the tangent space in a point of $\Spec(S)$? Again, in differential geometry it is the linear space of directions where one can measure linear growth, thus a derivation is a sum of the partial derivatives $\partial=\alpha_1\partial_1+\cdots+\alpha_d\partial_d$, summed over the directions spanning the tangent space. This translates to algebraic geometry by defining the tangent space in perfect analogy: The tangent space is the vector space of derivations $d:S\rightarrow k$,  that is $d=\alpha_1 d_{x_1}+\dots+\alpha_r d_{x_r}$. Before generalizing, note that the above partition of derivations follows from the fact that every derivation is determined by its value one the base of the tangent space, so that the above factoring just says $d(x_i)=\alpha_i$, $i=1,\dots,r$.

To make the generalization to the non-commutative situation, we choose to exploit the correspondence between maximal ideals and simple modules. This is convenient when it comes to the definition of localization: In differential geometry, the local ring of functions at a point is exactly that; the functions defined immediately close to the point. Translating to algebraic geometry, this is the same, and in the Zariski topology the local function ring will be isomorphic to the ring $S$ localized in the maximal ideal $\frak m$, i.e. $\mathcal O_{\Spec S,\frak m}\cong S_{\frak m}$. It is tempting to try to generalize the localization process to non-commutative rings. This has limited success if we use the algebraic definition of localization, but thinking of small perturbations of the functions near the point of interest works very well, the algebraic counterpart of considering continuous perturbations is the process of flat deformations. Thus the localization process is substituted by the deformation theory, the local ring in a point corresponding to a simple module is replaced by the local formal moduli in that point. True enough, this is a formal $k$-algebra, but contains all local information. In \cite{Siqveland111} we proved that this gives a true generalization of the commutative case.

We have defined the category $a_r$ of $r$-pointed Artinian $k$-algebras. The objects in this category fits in the diagram $$\xymatrix{k^r\ar[r]^\iota\ar[dr]_\Id&S\ar[d]^\rho\\&k^r,}$$ with $\rad(S)^n=(\ker\rho)^n=0.$ We use the notation $e_i=\iota(e_i)$ for short, and notice that any element $s\in S$ can be written $$s=1\cdot s\cdot 1=(\sum_{i=1}^re_i)s(\sum_{j=1}^re_j)=\sum_{i=1}^r\sum_{i=1}^re_ise_j=\sum_{i=1}^r\sum_{i=1}^rs_{ij},$$ where we have put $s_{ij}=e_ise_j$. Letting $S_{ij}=e_iSe_j$, we have a $k^r$-algebra homomorphism $\phi:S\rightarrow (S_{ij})_{1\leq i,j\leq r}$ given by $\phi(s)=\sum_{1\leq i,j\leq r}s_{ij}$ which obviously is an isomorphism. So for algebras $S$ in $a_r$, we consider only matrix algebras of the type $(S_{ij})$. Then it is  clear what could be the archetypical $k^r$-algebra:

For each pair $(i,j)$, $1\leq i,j\leq r$, let $t_{ij}(l),$ $1\leq l\leq d_{ij}$ be free matrix variables. Then each $t_{ij}(l)$ is a $r\times r$-matrix, spanning together with the idempotents $e_1,\dots, e_r$, a vector space $W$, and we let the free non-commutative polynomial algebra be the tensor algebra of $W$ over $k^r$. For example, for $r=2$, $d_{ij}=1$, $S=\left(\begin{matrix}k[t_{11}]&t_{12}\\t_{21}&k[t_{22}]\end{matrix}\right),$ and $S$ is the $k^2$-algebra generated by the elements $$e_1=\left(\begin{matrix}1&0\\0&0\end{matrix}\right), e_2=\left(\begin{matrix}0&0\\0&1\end{matrix}\right),$$ $$t_{11}=\left(\begin{matrix}t_{11}&0\\0&0\end{matrix}\right),
t_{12}=\left(\begin{matrix}0&t_{12}\\0&0\end{matrix}\right),t_{21}=\left(\begin{matrix}0&0\\
t_{21}&0\end{matrix}\right),t_{22}=\left(\begin{matrix}0&0\\
0&t_{22}\end{matrix}\right).$$

Recall the following, proved in e.g. \cite{Siqveland111}:
\begin{lemma}\label{Fausklemma} let $R$ be a $k$-algebra, $k$ algebraically closed, and let $V$ be a finite dimensional $R$-module. Then $V$ is simple if and only if the structure morphism $$\rho:R\rightarrow\End_k(V),$$ sending $r\in R$ to $\rho(r)(v)=r\cdot v,$ is surjective.
\end{lemma}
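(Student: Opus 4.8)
The plan is to recognize this equivalence as (a mild repackaging of) Burnside's theorem, and to deduce it from Schur's lemma together with the Jacobson density theorem. I treat the two implications separately, and throughout I take $V\neq 0$, as is implicit in the word ``simple''.

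\emph{Surjective $\Rightarrow$ simple.} Let $W\subseteq V$ be a nonzero $R$-submodule and fix $w\in W$ with $w\neq 0$. Given an arbitrary $v\in V$, extend $w$ to a $k$-basis of $V$ and let $\varphi\in\End_k(V)$ be the $k$-linear endomorphism with $\varphi(w)=v$; surjectivity of $\rho$ produces $r\in R$ with $\rho(r)=\varphi$, so that $v=\rho(r)(w)=r\cdot w\in W$. Hence $W=V$, and $V$ is simple. (Note this direction genuinely needs $V\neq 0$, since otherwise $\rho$ is vacuously surjective.)

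\emph{Simple $\Rightarrow$ surjective.} Assume $V$ is simple. By Schur's lemma $D:=\End_R(V)$ is a division ring, and since the image of $k$ in $R$ is central, $D$ is a $k$-subalgebra of $\End_k(V)$, hence $\dim_k D<\infty$. For $d\in D$ the subalgebra $k[d]$ is then a finite-dimensional commutative domain over $k$, therefore a field, therefore equal to $k$ because $k$ is algebraically closed; thus $d\in k$ and $D=k$. Now $V$ is a finite-dimensional simple $R$-module with commutant $k$, so the Jacobson density theorem says that $\rho(R)$ is dense in $\End_D(V)=\End_k(V)$ for the finite topology, and finite-dimensionality of $V$ upgrades ``dense'' to ``equal''. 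Hence $\rho$ is surjective.

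The only real content sits in the density step, which I regard as the main obstacle. If one does not wish to invoke the density theorem as a black box, the argument is short once $D=k$ is known: for $f\in\End_k(V)$ and a $k$-basis $v_1,\dots,v_n$ of $V$, pass to the $R$-module $V^{n}$, which is semisimple (a finite direct sum of copies of the simple $V$); one has $\End_R(V^{n})\cong M_n(k)$, the coordinatewise copy $f^{\oplus n}$ of $f$ is $M_n(k)$-linear, and writing $V^{n}=R\cdot(v_1,\dots,v_n)\oplus N$ with $R$-linear projection $e$ onto the first summand, the facts $e(v_1,\dots,v_n)=(v_1,\dots,v_n)$ and $e\,f^{\oplus n}=f^{\oplus n}e$ give $(f(v_1),\dots,f(v_n))=f^{\oplus n}(v_1,\dots,v_n)=e\bigl(f^{\oplus n}(v_1,\dots,v_n)\bigr)\in R\cdot(v_1,\dots,v_n)$; hence some $r\in R$ satisfies $\rho(r)(v_i)=f(v_i)$ for all $i$, i.e.\ $\rho(r)=f$. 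In the final text I would either spell out this paragraph or simply refer to it as standard, cf.\ \cite{Siqveland111}.
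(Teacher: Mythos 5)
Your proof is correct, but note that the paper itself offers no argument for this lemma: it is stated as a recalled fact, ``proved in e.g.\ \cite{Siqveland111}'', so there is no in-paper proof to measure you against. What you have written is the standard Burnside-theorem argument, and both halves check out: the easy direction (surjective $\Rightarrow$ simple, with the sensible caveat $V\neq 0$) only uses that any nonzero vector can be mapped anywhere by some $k$-endomorphism; the hard direction correctly isolates where algebraic closedness enters, namely in forcing $\End_R(V)=k$ via Schur's lemma and finite-dimensionality, after which Jacobson density plus $\dim_k V<\infty$ gives surjectivity. Your self-contained replacement for the density theorem (passing to $V^{n}$, using semisimplicity to split off $R\cdot(v_1,\dots,v_n)$, and commuting the projection with $f^{\oplus n}$ through $\End_R(V^{n})\cong M_n(k)$) is also sound and is exactly the classical finite-dimensional density argument, so it would make the lemma genuinely self-contained rather than outsourced to the citation. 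The only cosmetic remark: the paper elsewhere works with right modules, for which $\rho$ is an algebra map from $\Aop$ (equivalently an anti-homomorphism); surjectivity of the image in $\End_k(V)$ is unaffected, so your argument transfers verbatim, but you may wish to flag the convention.
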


Let $S_{ii}=k\langle t_{ii}(1),\dots,t_{ii}(d_{ii})\rangle$. Then there is a surjection $\rho_{ii}:S\rightarrow S_{ii}$ sending $e_i$ to $1$, $t_{ii}(l)$ to $t_{ii}(l)$ and all other generators to $0$.

\begin{lemma} Let $V$ be a simple (right) $S$-module. Then $S\simeq S_{ii}/\frak m_{ii}=V_{ii}$ where $\frak m_{ii}$ is maximal in $S_{ii}$. So the simple $S$-modules are the $S$-modules on the diagonal.
\end{lemma}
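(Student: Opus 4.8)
The plan is to reduce the assertion to the elementary fact that a simple module over a finite direct product of rings is supported on a single factor. First I would use the orthogonal idempotents: since $1=\sum_{i=1}^r e_i$, any right $S$-module decomposes as a $k$-vector space $V=\bigoplus_{i=1}^r Ve_i$, and right multiplication by an element $s_{ij}=e_i s e_j\in S_{ij}$ carries $Ve_i$ into $Ve_j$ while annihilating $Ve_m$ for $m\neq i$; in particular the diagonal block $S_{ii}$ acts on $Ve_i$ and nowhere else.

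The heart of the argument is to show that every simple $V$ is killed by the two-sided ideal $\mathfrak a\subseteq S$ generated by the off-diagonal variables $t_{ij}(l)$, $i\neq j$. These generators lie in $\ker(\rho:S\to k^r)$ because $\rho(S_{ij})\subseteq e_i k^r e_j=0$ for $i\neq j$, and for the archetypical $k^r$-algebra $\mathfrak a$ is moreover nilpotent (the off-diagonal directions being square-zero, so $\mathfrak a^2=0$), whence $S/\mathfrak a\cong\prod_{i=1}^r S_{ii}$ with $S_{ii}=k\langle t_{ii}(1),\dots,t_{ii}(d_{ii})\rangle$. Since a nilpotent ideal lies in $\rad(S)$ and a simple module is annihilated by $\rad(S)$, we get $V\mathfrak a=0$, so the $S$-action on $V$ factors through the surjection $S\twoheadrightarrow\prod_i S_{ii}$: all $e_j$ with $j\neq i_0$ and all off-diagonal generators act as $0$ on $V$ for a single index $i_0$, which is exactly to say that $V$ is pulled back along $\rho_{i_0i_0}$. (In the finite-dimensional case one can run the same reasoning with Lemma~\ref{Fausklemma}, which gives $\rho:S\twoheadrightarrow\End_k(V)$, once it is known that $\mathfrak a$ acts nilpotently.)

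Finally, a simple module over $\prod_i S_{ii}$ is a simple $S_{i_0i_0}$-module for exactly one $i_0$, so $V\cong S_{i_0i_0}/\mathfrak m_{i_0i_0}=V_{i_0i_0}$ for a maximal right ideal $\mathfrak m_{i_0i_0}$, with $e_{i_0}$ acting as the identity; and conversely, since $\rho_{ii}$ is surjective, each $S_{ii}/\mathfrak m_{ii}$ pulls back to a simple $S$-module, giving the bijection $\Simp(S)=\bigsqcup_{i=1}^r\Simp(S_{ii})$---the simple $S$-modules live on the diagonal. I expect the only genuinely delicate point to be the middle step: pinning down which part of $S$ acts trivially on all simples, i.e. the nilpotence (or at least the radicality) of the off-diagonal ideal $\mathfrak a$. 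For a truly free tensor algebra this would fail, since two blocks can be mixed by a higher-dimensional simple, so the statement really rests on the off-diagonal generators being nilpotent in the archetypical algebra, a structural feature I would make explicit at the outset.
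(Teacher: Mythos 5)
Your argument stands or falls on the middle step, namely the claim that the two-sided ideal $\frak a$ generated by the off-diagonal variables $t_{ij}(l)$, $i\neq j$, satisfies $\frak a^2=0$, hence is nilpotent, hence annihilates every simple module. For the archetypical $k^r$-algebra of this section that claim is false: $S$ is by definition the tensor algebra of $W$ over $k^r$, so a product such as $t_{12}(l)t_{21}(m)$ is a nonzero element of $\frak a^2$, lying in the corner $e_1Se_1$ (note that the corner $e_iSe_i$ strictly contains $k\langle t_{ii}(1),\dots,t_{ii}(d_{ii})\rangle$; the $S_{ii}$ of the lemma is a quotient of $S$ via $\rho_{ii}$, not a corner). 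Consequently $\frak a$ is neither nilpotent nor contained in the Jacobson radical, and the factorization of the $S$-action on a simple module through $\prod_i S_{ii}$ --- the engine of your whole proof --- is unavailable. The caveat in your final sentence is therefore not a side remark but the exact point of failure: when off-diagonal variables run in both directions, the free algebra really does have simple modules mixing the blocks (for instance $Ve_1=Ve_2=k$ with $t_{12}$ and $t_{21}$ acting by $1$ and the diagonal variables by scalars), so no argument of the shape ``the off-diagonal ideal acts trivially on every simple'' can be repaired; as written, your proof establishes the statement only for quotients of $S$ in which the off-diagonal products are killed (for example the Artinian objects of $a_r$), not for $S$ itself.

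The paper's own proof takes a different route that never invokes the radical: it works with maximal ideals and the surjections $\rho_{ii}:S\to S_{ii}$ sending all other generators to $0$. One direction is the isomorphism $S/\rho_{ii}^{-1}(\frak m_{ii})\cong S_{ii}/\frak m_{ii}$, exhibiting each diagonal point as a simple $S$-module; for the converse it starts from a maximal ideal $\frak m\subset S$, argues that $\rho_{ii}(\frak m)=S_{ii}$ cannot hold for every $i$ (this would force $1=\sum_i e_i\in\frak m$), picks an $i$ with $\rho_{ii}(\frak m)\subseteq\frak m_{ii}$ maximal, and concludes $\frak m\subseteq\rho_{ii}^{-1}(\frak m_{ii})$, hence equality by maximality. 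If you redo the argument, follow that maximal-ideal route rather than the nilpotence route; and note that the passage from simple modules to ideals of the form $\rho_{ii}^{-1}(\frak m_{ii})$ is precisely where the block-mixing simples you worried about have to be excluded (e.g.\ by restricting attention to the one-dimensional simples, the points of $\Simp(S)$ on the diagonal), so that concern should be made explicit rather than replaced by a square-zero assumption the tensor algebra does not satisfy.
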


\begin{proof} For a maximal ideal $\frak m_{ii}\subset S_{ii}$, we have an isomorphism $$S/\rho_{ii}^{-1}(\frak m_{ii})\overset\simeq\rightarrow S_{ii}/\frak m_{ii}.$$ This proves that $V_i$ is a simple $S$-module. For the converse, assume $\frak m\subset S$ is maximal. If $\rho_{ii}(\frak m)=S_{ii}$ for all $i$, it follows that $1=\sum e_i$ is in $\frak m$ which is impossible. Thus there exists an $i$ where $\rho_{ii}(\frak m)\subseteq\frak m_{ii}$ for a maximal ideal $\frak m_{ii}\subset S_{ii}$. Then $\frak m\subseteq\rho_{ii}^{-1}(\rho_{ii}(\frak m))\subseteq
\rho_{ii}^{-1}(\frak m_{ii})\subseteq S.$ Then $\frak m=\rho_{ii}^{-1}(\frak m_{ii}),$ and the lemma is proved.
\end{proof}

Now, let $\Simp(S)$ be the set of simple modules that we have found above. We generalize the Zariski topology to the non-commutative case by the following: For an element $s\in S$ we define the subset $D(s)=\{V\in\Simp(S)|\rho(s):V\rightarrow V\text{ is invertible}\}$. The non-commutative Zariski topology is the topology generated by the sets $D(s)$, $s\in S$.

Having the non-commutative space, we also generalize the tangent space in a point, that is in a simple module $V$. As in the commutative case, the tangent space in $V$ is the $k$-vector space of derivations $$T_{\Simp(S),V}=\Der_k(S,\End_k(V,V))/\Inner,$$ where $\Inner$ is the set of inner derivations. This is to say, derivations in a direction is independent on the magnitude of the base vector. We see that in the case of the free $k^r$-algebra $S$, every simple module is one-dimensional, so that $T_{\Simp(S),V}=\Der_k(S,k)/\Inner.$ It is important to notice that $k$ is an $S$ bimodule in a special way: $k\cong\Hom_k(k,k)$, and for any two $S$-modules $V_1$, $V_2$, the left and right actions of $s\in S$ on the bimodule $\Hom_k(V_1,V_2)$ is given respectively by the left and right skew morphism in the diagram

$$\xymatrix{V_1\ar[rd]\ar[r]^\phi&V_2\ar[d]^{\cdot s}\\V_1\ar[u]^{\cdot s}\ar[ur]&V_2,}$$ i.e. $(s\cdot\phi)(v)=\phi(vs)$ and $(\phi\cdot s)(v)=\phi(v)s$.

For a bimodule $M$ the subspace $\Inner\subseteq\Der_k(S)$ is defined as the image of $$M\rightarrow\Der_k(S,M),\text{}m\mapsto\ad(m),$$ where $\ad(m)(s)=ms-sm.$ In this particular case, the inner derivations are easily computed in a point $V_{nn}(p)$: Any derivation is determined on its action on the radical, that is its action on each $t_{ij}(l)$. For an element $\alpha\in k$ we find $$\ad(\alpha)(t_{ij}(l))=\alpha\cdot t_{ij}(l)-t_{ij}(l)\cdot\alpha=0$$ We should also notice that for any derivation (in particular for the inner derivations), for any idempotent we have $\delta(e_i)=\delta(e_i^2)=2\delta(e_i)\Rightarrow\delta(e_i)=0$. So for the free $k^r$-algebra $S$, $T_{\Simp(S),V}=\Der_k(S,k)$ and our generalized result is then that $T_{\Simp(S), V_{ii}(p)}=\oplus_{n=1}^{d_{ii}}k\cdot d_{t_{ii}(n)-p_n}$, where $p=(p_1,\dots,p_{d_{ii}})$ is a point in $S_{ii}=k[t_{11}(1),\dots,t_{11}(d_{ii})]$.

In the commutative situation, for a commutative $k$-algebra $A$ with two different simple $A$-modules $V_1=A/\frak m_1$, $V_2=A/\frak m_2$ it is well known that  $\Ext^1_A(V_1,V_2)\cong\Der_k(V_1,V_2)/\Inner=0$.
In the non-commutative case however, this is is different. The non-commutative information is contained in the different  tangent spaces and higher order derivations between the different points. For simplicity, we give the following definition in all generality, even if it makes sense only for non-commutative $k$-algebras.

\begin{definition}
Let S be any $k$-algebra. The tangent space between two $S$-modules $M_1$ and $M_2$ is $$\Ext^1_A(M_1,M_2)\cong\HH^1(A,\Hom_k(M_1,M_2))$$ where $\HH^\cdot$ is the Hochschild cohomology.
\end{definition}

\begin{example}
Let $S=\left(\begin{matrix}k[t_{11}]&k\cdot t_{12}\\k\cdot t_{21}&k[t_{22}]\end{matrix}\right)$ and consider two general points $V_1=k[t_{11}]/(t_{11}-a)$, $V_2=k[t_{22}]/(t_{22}-b)$. First, we compute $\Ext^1_A(V_i,V_j)\cong\Der_k(V_i,V_j)/\Inner(V_i,V_j)$ by derivations.
\vskip0,1cm
\noindent $\Ext^1_S(V_1,V_1)$: Let $\delta\in\Der_k(S,\End_k(V_1))$. Then $$\delta(e_i)=\delta(e_i^2)=2\delta(e_i)\Rightarrow\delta(e_i)=0, \text{}i=1,2.$$
$$\delta(t_{12})=\delta(t_{12}e_2)=\delta(t_{12})e_2=0,$$
$$\delta(t_{21})=\delta(e_2t_{21})=e_2\delta(t_{21})=0,$$
$$\delta(t_{22})=\delta(t_{22})e_2=0,$$ and finally $$\delta(t_{11})=\alpha.$$ As all inner derivations are zero (easily seen from the computation above), we find that $\Ext^1_S(V_1,V_1)$ is generated by the derivation sending $t_{11}$ to $\alpha$, and all other generators to $0$.
\vskip0,1cm
\noindent $\Ext^1_S(V_1,V_2)$:

For $\delta\in\Der_k(S,\End_k(V_1,V_2))$ things are slightly different.
$\delta(e_1)=\delta(e_1^2)=e_1\delta(e_1)+\delta(e_1)e_1=\delta(e_1)$, that is, the above trick don't work quite the same way. However, for every derivation $\delta:S\rightarrow\End_k(V_1,V_2),$ we find $\delta(e_1)=\alpha$, $\delta(e_2)=-\alpha$,

$$\begin{aligned}
\delta(e_1)&=\alpha,\text{ }\delta(e_2)=-\alpha,\\
\delta(t_{11})&=\delta(t_{11}e_1)=\delta(t_{11})e_1+t_{11}\delta(e_1)=a\alpha,\\
\delta(t_{21})&=\delta(t_{21}e_1)=\delta(t_{21})e_1=0,\\
\delta(t_{22})&=\delta(e_2t_{22})=\delta(e_2)t_{22}=-b\alpha\\
\delta(t_{12})&=\rho\end{aligned}$$

So a general derivation can be written, the $^\ast$ denoting the dual, $$\delta=\alpha e_1^\ast-\alpha e_2^\ast+a\alpha t_{11}^\ast-b\alpha t_{22}^\ast+\rho t_{12}^\ast.$$ For the inner derivations, we compute
$$\begin{aligned}
\ad_\beta(e_1)&=\beta e_1-e_1\beta=-\beta\\
\ad_\beta(e_2)&=\beta e_2-e_2\beta=\beta\\
\ad_\beta(t_{11})&=-\beta a\\
\ad_\beta(t_{22})&=\beta b,
\end{aligned}$$

saying that $$\ad_\beta=\gamma e_1^\ast-\gamma e_2^\ast+a\gamma t_{11}^\ast-b\gamma t_{22}^\ast,\text{ where we have put }\gamma=-\beta.$$
So as $\ad_\beta(t_{12})=0$, and there are no conditions on $\delta(t_{12})$, we get

$$\Ext^1_S(V_1,V_2)=k\cdot t_{12}^\ast=k\cdot d_{t_{12}}.$$ The cases $\Ext^1_S(V_2,V_1)$ and $\Ext^1_S(V_2,V_2)$ are exactly similar.
\end{example}

Generalizing the computation in the above example, we have proved the following:

\begin{lemma} Let $S$ be the general, free, $k^r$-algebra, and let $V_i=V_{ii}(p_{ii})$ be the point $p_{11}$ in entry $i,i$.
Then the tangent space from $V_i$ to $V_j$ is $\Ext^1_S(V_i,V_j)=\oplus_{l=1}^{d_{ij}} k\cdot d_{t_{ij}(l)}.$
\end{lemma}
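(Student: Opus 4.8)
The plan is to run, in full generality, the computation already carried out in the Example, organised so that the bookkeeping stays manageable. First I would reduce the statement to a question about derivations. By the Definition above, $\Ext^1_S(V_i,V_j)\cong\HH^1(S,\Hom_k(V_i,V_j))$, and degree-one Hochschild cohomology of any $k$-algebra with coefficients in a bimodule is derivations modulo inner derivations; this is exactly the description $\Ext^1_S(V_i,V_j)\cong\Der_k(S,\Hom_k(V_i,V_j))/\Inner$ used in the Example. So it suffices to compute this quotient for every pair $(i,j)$.

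Next I would exploit that every simple module here is one-dimensional. By the second Lemma $V_m\cong S_{mm}/\frak m_{mm}\cong k$, so the structure map $S\to\End_k(V_m)=k$ is a character $\chi_m$, determined by $\chi_m(e_n)=\delta_{mn}$, $\chi_m(t_{mm}(l))=$ the $l$-th coordinate of the point $p_{mm}$, and $\chi_m(t_{ab}(l))=0$ whenever $(a,b)\neq(m,m)$. Consequently $\Hom_k(V_i,V_j)\cong k\cdot\phi_0$ is one-dimensional, with bimodule structure $s\cdot\phi_0=\chi_i(s)\phi_0$ and $\phi_0\cdot s=\chi_j(s)\phi_0$ (read off from the skew-morphism description of the bimodule action). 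Writing $\delta(s)=f(s)\phi_0$ then identifies $\Der_k(S,\Hom_k(V_i,V_j))$ with the space of twisted derivations $f\colon S\to k$ satisfying $f(ab)=f(a)\chi_j(b)+\chi_i(a)f(b)$, and $\Inner$ with the scalar multiples of $s\mapsto\chi_j(s)-\chi_i(s)$.

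Then I would use the universal property of the tensor algebra: since $S=T_{k^r}(W)$, a twisted derivation $f$ is freely determined by its values on the generators $e_1,\dots,e_r$ and $t_{ab}(l)$, subject only to the relations $e_n^2=e_n$, $\sum_n e_n=1$ and $t_{ab}(l)=e_at_{ab}(l)e_b$. Feeding these into the twisted Leibniz rule and splitting into the cases $m\in\{i,j\}$ versus $m\notin\{i,j\}$ for the idempotents, and $(a,b)$ equal to $(i,i)$, $(j,j)$, $(i,j)$, $(j,i)$ or none of these for the variables, one finds: if $i=j$ then all $f(e_n)=0$ and $f(t_{ab}(l))=0$ unless $a=b=i$, with $f(t_{ii}(1)),\dots,f(t_{ii}(d_{ii}))$ unconstrained; if $i\neq j$ then $f(e_i)=-f(e_j)=\alpha$ is free, the other $f(e_n)=0$, the relation $t_{ab}(l)=e_at_{ab}(l)e_b$ forces $f(t_{ii}(l))=\alpha\chi_i(t_{ii}(l))$, $f(t_{jj}(l))=-\alpha\chi_j(t_{jj}(l))$ and $f(t_{ab}(l))=0$ for all remaining $(a,b)$, while $f(t_{ij}(1)),\dots,f(t_{ij}(d_{ij}))$ stay free. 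Finally I would quotient by the inner derivations: when $i=j$ the map $s\mapsto\chi_i(s)-\chi_i(s)$ is zero, so there are none; when $i\neq j$ the inner derivations are exactly the line on which the $t_{ij}(l)$-values vanish (because $t_{ij}(l)$ is off-diagonal and hence killed by both $\chi_i$ and $\chi_j$), so they precisely absorb the parameter $\alpha$. In both cases the quotient is spanned by the classes dual to $t_{ij}(1),\dots,t_{ij}(d_{ij})$, i.e.\ $\Ext^1_S(V_i,V_j)=\oplus_{l=1}^{d_{ij}}k\cdot d_{t_{ij}(l)}$.

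The only genuinely delicate point is the handling of the idempotents in the last step, in the case $i\neq j$: here the $f(e_m)$ do \emph{not} all vanish, and it is the interplay of $f(e_i)=-f(e_j)=\alpha$ with the identities $t_{ab}(l)=e_at_{ab}(l)e_b$ that simultaneously kills the off-diagonal $t$-values, pins the diagonal ones to $\pm\alpha$ times a coordinate, and then gets cancelled against the inner derivations. Making precise "freely determined on the generators subject to the listed relations" for the $k^r$-algebra $S$ is the other thing to be careful about; everything beyond that is the routine Leibniz-rule computation already displayed in the Example.
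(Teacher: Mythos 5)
Your proposal is correct and takes essentially the same approach as the paper: the paper "proves" this lemma simply by generalizing the preceding example's computation of $\Der_k(S,\Hom_k(V_i,V_j))/\Inner$, and your write-up is exactly that generalization carried out in detail (the idempotent bookkeeping, the use of the relations $t_{ab}(l)=e_a t_{ab}(l)e_b$, and the cancellation of the free idempotent parameter against the inner derivations all match the example's computation). Nothing further is needed.
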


Now, we will explain what happens in the case with relations, that is, quotients of the free $k^r$-algebra.

\begin{example} We let $R=\left(\begin{matrix}k[t_{11}]&k\cdot t_{12}\\k\cdot t_{21}&k[t_{22}]\end{matrix}\right)/(t_{11}t_{12}-t_{12}t_{22}).$ The polynomial in the ideal is really in the entry $(1,2)$, but there is no ambiguity writing it like this. The points are still the simple modules along the diagonal, but a derivation $\delta\in\Der_k(R,\Hom_k(V_{ii}(p_{ii}),V_{jj}(p_{jj})))$, must this time respect the quotient;$$\delta(t_{11}t_{12}-t_{12}t_{22})=0.$$ This says $$\delta(t_{11}t_{12}-t_{12}t_{22})=t_{11}\delta(t_{12})+\delta(t_{11})t_{12}-t_{12}\delta(t_{22})-\delta(t_{12})t_{22}=0,$$ and is fulfilled for any $\delta\in\Ext^1_R(V_i,V_j)$, $(i,j)\neq (1,2)$. When $\delta\in\Ext^1_R(V_1,V_2)$, we get that the above equation is equivalent to $$t_{11}\delta(t_{12})-\delta(t_{12})t_{22}=\delta(t_{12})(t_{11}-t_{22})=0.$$ Thus in the case that $p_{11}\neq p_{22}$ the tangent direction is annihilated: This quotient has no tangent direction from $V_1(p_1)$ to $V_2(p_2)$ unless $p_1=p_2$.
\end{example}

This example illustrates the geometry of $k^r$-algebras, and is of course nothing else than the obvious generalization of the the ordinary tangent space:

\begin{lemma}
Let $S$ be a finitely generated $k^r$-algebra with residue $\rho:S\rightarrow k^r$ and radical $\frak m=\ker\rho.$ Let $p_1$, $p_2$ be two points on the diagonal of $S$ with respective quotients $V_1\cong V_2\cong k$. Then $T_{p_1,p_2}=\Ext^1_S(V_1,V_2)=\Hom_k(\frak m/\frak m^2,k)$ where the action on $k\cong\Hom_k(V_1,V_2)$ is the left-right action defined by $(s\cdot\phi)(v)=\phi(v\cdot s)$, $(\phi\cdot s) (v)=\phi(v)\cdot s.$
\end{lemma}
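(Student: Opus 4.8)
The plan is to compute $\Ext^1_S(V_1,V_2)$ via the standard identification with equivalence classes of derivations modulo inner derivations, $\Ext^1_S(V_1,V_2)\cong\Der_k(S,\Hom_k(V_1,V_2))/\Inner$, and then to show that restriction of a derivation to the radical $\frak m$ induces the claimed isomorphism onto $\Hom_k(\frak m/\frak m^2,k)$. First I would recall from the discussion preceding the lemma that any derivation $\delta\colon S\to\Hom_k(V_1,V_2)\cong k$ kills all idempotents $e_i$ (since $\delta(e_i)=2\delta(e_i)$ forces $\delta(e_i)=0$, using that $e_i$ acts as identity or zero on $V_1,V_2$), so $\delta$ is determined by its values on the generators lying in $\frak m$; more precisely $\delta$ factors through $\frak m$. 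Next I would observe that since $\frak m=\ker\rho$ acts as zero on both $V_1$ and $V_2$ (these are annihilated by $\frak m$), the Leibniz rule gives $\delta(m_1m_2)=m_1\cdot\delta(m_2)+\delta(m_1)\cdot m_2=0$ for $m_1,m_2\in\frak m$, because the left and right $S$-actions on $\Hom_k(V_1,V_2)$ defined by $(s\cdot\phi)(v)=\phi(vs)$ and $(\phi\cdot s)(v)=\phi(v)s$ send anything in $\frak m$ to zero. Hence $\delta$ vanishes on $\frak m^2$ and descends to a $k$-linear map $\bar\delta\colon\frak m/\frak m^2\to k$. Conversely, any $k$-linear functional on $\frak m/\frak m^2$ lifts to a derivation: pick a $k$-linear splitting $S\cong k^r\oplus\frak m$, define $\delta$ to be zero on $k^r$ and the given functional (composed with $\frak m\to\frak m/\frak m^2$) on $\frak m$, and check via the splitting that the Leibniz rule holds — the cross terms all land in the image of $\frak m$ acting on $k\cong\Hom_k(V_1,V_2)$, which is zero, exactly as above. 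This gives a surjection $\Der_k(S,k)\to\Hom_k(\frak m/\frak m^2,k)$ with a section, and in fact an isomorphism $\Der_k(S,k)\xrightarrow{\sim}\Hom_k(\frak m/\frak m^2,k)$ once we note that a derivation killing $k^r$ and $\frak m^2$ is determined by $\bar\delta$.

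It then remains to deal with the inner derivations. For $\phi\in\Hom_k(V_1,V_2)\cong k$, the inner derivation $\ad(\phi)$ is given by $\ad(\phi)(s)=\phi\cdot s-s\cdot\phi$, and on the radical $\frak m$ this vanishes for the same reason: both $\phi\cdot m$ and $m\cdot\phi$ are zero when $m\in\frak m$. Thus $\ad(\phi)$ is supported on $k^r$, where it need not vanish (as the worked example shows, $\ad_\beta(e_1)=-\beta$, $\ad_\beta(e_2)=\beta$), but since every derivation into $\Hom_k(V_1,V_2)$ already kills $k^r$, the inner derivations are in fact all zero as elements of $\Der_k(S,\Hom_k(V_1,V_2))$. (Alternatively, one restricts attention to the canonical representatives that vanish on $k^r$, and then $\Inner=0$ in that normalization; either way $\Der/\Inner\cong\Der$ here.) Combining, $\Ext^1_S(V_1,V_2)=\Der_k(S,k)/\Inner\cong\Der_k(S,k)\cong\Hom_k(\frak m/\frak m^2,k)$, which is the assertion.

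The one point requiring genuine care — the main obstacle — is verifying that an arbitrary $k$-linear functional on $\frak m/\frak m^2$ genuinely extends to a \emph{derivation} on all of $S$, i.e. that the Leibniz rule is globally consistent and not merely consistent on products of radical elements. This is where finite generation of $S$ as a $k^r$-algebra is used: one writes $S$ as a quotient of a tensor algebra $T=T_{k^r}(W)$ by an ideal $J\subseteq\frak m_T^2$, defines the candidate derivation freely on the generators $W$ (using that on a tensor/free algebra a derivation is exactly a choice of values on generators), and then checks it kills $J$ — but this is automatic because $J\subseteq\frak m_T^2$ and the derivation already annihilates $\frak m_T^2$ by the argument above, so it descends to $S$. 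I would present this extension argument carefully and treat the remaining bookkeeping (well-definedness, $k$-linearity, compatibility of the two directions of the correspondence) as routine, exactly parallel to the commutative Kähler-differential computation and to the explicit examples already carried out above.
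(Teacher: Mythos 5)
Your framework (computing $\Ext^1_S(V_1,V_2)$ as $\Der_k(S,\Hom_k(V_1,V_2))/\Inner$ and relating it to $\frak m/\frak m^2$) is the natural one — the paper itself gives no proof of this lemma, presenting it as the generalization read off from the preceding example — but two of your key steps fail precisely in the case the lemma is about, namely when $p_1$ and $p_2$ sit at different diagonal indices. First, it is not true that every derivation $\delta\colon S\to\Hom_k(V_1,V_2)$ kills the idempotents: on the bimodule $\Hom_k(V_1,V_2)$ the idempotent $e_{i_1}$ acts as the identity on one side and as zero on the other, so $\delta(e_{i_1})=\delta(e_{i_1}^2)=e_{i_1}\delta(e_{i_1})+\delta(e_{i_1})e_{i_1}=\delta(e_{i_1})$, which is no constraint at all; the paper's own example records exactly this (``the above trick don't work quite the same way'') and exhibits derivations with $\delta(e_1)=\alpha$, $\delta(e_2)=-\alpha$. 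Your handling of the inner derivations is then internally inconsistent: you quote $\ad_\beta(e_1)=-\beta\neq 0$ and in the same breath claim that every derivation kills $k^r$, concluding $\Inner=0$. In fact the inner derivations are nonzero; they vanish on $\frak m$ and are exactly the kernel of the restriction map $\Der_k(S,\Hom_k(V_1,V_2))\rightarrow\Hom(\frak m/\frak m^2,\Hom_k(V_1,V_2))$, and it is only after dividing by them that one is reduced to functionals on $\frak m/\frak m^2$.

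Second, and more seriously, your converse step is false as stated: an arbitrary $k$-linear functional $\psi$ on $\frak m/\frak m^2$ does not extend to a derivation. Writing $S=k^r\oplus\frak m$ and imposing the Leibniz rule on the cross terms $c\,m$ and $m\,c$ with $c\in k^r$, $m\in\frak m$ forces $\psi(cm)=c\cdot\psi(m)$ and $\psi(mc)=\psi(m)\cdot c$, i.e.\ $\psi$ must be a homomorphism of $k^r$-bimodules into $\Hom_k(V_1,V_2)$, equivalently it must vanish outside the block $e_{i_1}(\frak m/\frak m^2)e_{i_2}$; these cross terms do not lie in $\frak m^2$ and are not disposed of by ``the $\frak m$-action on $k$ is zero''. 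With your plain-dual reading the conclusion even contradicts the paper's computation: for the free $k^2$-algebra, $\Ext^1_S(V_1,V_2)=k\cdot d_{t_{12}}$ is one-dimensional, while $\frak m/\frak m^2$ has dimension four. The statement you should be proving (and the one the lemma's insistence on the left-right action on $k$ is pointing at) is $\Ext^1_S(V_1,V_2)\cong\Hom_{k^r\text{-}k^r}(\frak m/\frak m^2,\Hom_k(V_1,V_2))$: restriction of a derivation to $\frak m$ kills $\frak m^2$ and is a bimodule map; a bimodule map extends to a derivation via the splitting $S=k^r\oplus\frak m$; and the kernel of restriction is exactly $\Inner$. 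Your tensor-algebra remark is fine once this is in place (and once the relation ideal is taken inside $\frak m_T^2$), but as written the proposal proves a statement that is too large by precisely the off-diagonal-block discrepancy.
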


The tangent space is not enough to reconstruct the algebra, not even in the commutative situation. As always, to get the full geometric picture we also need the higher order derivatives, the higher order momenta. Even if we cannot reconstruct the algebra in all cases, we get a algebra that is geometrically equivalent (Morita equivalent), and that suffices in construction of moduli.

\section{Higher order derivatives: Generalized Matric Massey Products}

in \cite{Eriksen03} Eriksen has given the description of non-commutative deformation functor. In \cite{Siqveland111} we have defined the generalized matric Massey products. We recall the parts necessary to make the generalization:

Let $A$ be a $k$-algebra. A deformation $M_S$ of an $A$-module $M$ to an Artinian local $k$-algebra $S$ with residue field $k$, i.e. $S\in\ob(\ell)$, is an $S\otimes_k A$-module, flat over $S$, such that $k\otimes_S M_S\cong M$. Two deformations $M_S$ and $M_S^\prime$ are equivalent if there exists an isomorphism $\phi:M_S\rightarrow M_S^\prime$ commuting in the diagram $$\xymatrix{M_S\ar[rr]^\phi\ar[dr]&&M_S^\prime\ar[dl]\\&M&.}$$  This gives the deformation functor $\Def_M:\ell\rightarrow\Sets$ satisfying Schlessinger's wellknown criteria \cite{Schlessinger68}.

The flatness of $M_S\in\Def_M(S)$ over $S$ is equivalent with the fact that as $S$-module, $M_S\cong S\otimes_k M$. For a small surjective morphism $0\rightarrow I \rightarrow S\overset\pi\twoheadrightarrow R\rightarrow 0$, we use induction and linear algebra on the exact sequence $0\rightarrow I\rightarrow S\rightarrow k\rightarrow 0$ to see this. So to give an $A$-module structure on $M_S$ that is a lifting of the $R$-module structure, is equivalent to give a $k$-algebra homomorphism $\sigma_S: A\rightarrow\End_k(M_S)$ commuting in the diagram $$\xymatrix{A\ar[r]^{\sigma_S}\ar[dr]_\sigma&\End_k(M_S)\ar[d]\\&\End_k(M_R).}$$

Using the fact that $\sigma_S$ should commute with the action of $S$, that is, it should be $S$-linear, it is sufficient to define $\sigma_S(a):M\rightarrow S\otimes_k M$. For each $a\in A$, $\sigma_S$ should be a lifting of $\sigma_R$, and so we choose the obvious lifting of $\sigma_R$. Then all properties but the associativity are fulfilled, and the associativity of $\sigma_S$ says $\sigma_S(ab)-\sigma_S(a)\sigma_S(b)=0$. So our obstruction for lifting $M_R$ are the elements $\sigma_S(ab)-\sigma_S(a)\sigma_S(b)\in\Hom_k(M,M\otimes_k I)\cong\End_k(M,M)\otimes_k I$. As these are Hochschild two-cocycles, we have our obstruction $$o(M_R,\pi)\in\HH^2(A,\End_k(M,M))\otimes_kI,$$ with the property that $M_R$ can be lifted to a $M_S$ if and only if $o(M_R,\pi)=0$.

Then we have the an alternative way of viewing this: Choose a free resolution of the $A$-module $M$, $$0\leftarrow M\overset{\varepsilon}\leftarrow L_0\overset{d_0}\leftarrow L_1\overset{d_1}\leftarrow L_2\overset{d_2}\leftarrow\cdots.$$ we have proved that to give a lifting of $M$ to $S$ is equivalent to give a lifting of complexes

$$\xymatrix{&0\ar[d]&0\ar[d]&0\ar[d]&0\ar[d]&\\
0&I\otimes_k M\ar[l]\ar[d]&I\otimes_k L_0\ar[l]\ar[d]&I\otimes_k L_1\ar[l]\ar[d]&I\otimes_k L_2\ar[l]\ar[d]&\cdots\ar[l]\\
0&M_S\ar[l]\ar[d]&S\otimes_k L_0\ar[l]\ar[d]&S\otimes_k L_1\ar[l]\ar[d]&S\otimes_k L_2\ar[l]\ar[d]&\cdots\ar[l]\\
0&M\ar[l]\ar[d]&L_0\ar[l]\ar[d]&L_1\ar[l]\ar[d]&L_2\ar[l]\ar[d]&\cdots\ar[l]\\
&0&0&0&0}$$

For $k[\varepsilon]=k[x]/(x^2)$ as usual, i.e. $\varepsilon^2=0$, the tangent space of the deformation functor is $$\Def_M(k[\varepsilon])\cong\Ext^1_A(M,M)\cong\HH^1(A,\End_k(M)),$$ and likewise for the obstruction space. Using this, we find the correspondence $\Ext^1_A(M,M)\overset\phi\rightarrow\HH^1(A,\End_k(M))$ given as follows: Given a representative $\xi\in\Hom_A(L_1,M)$ for $\overline{\xi}\in\Ext^1_A(M,M)$. Choose a $k$-linear section $\sigma:V\rightarrow L_0$ and let $x\in L_1$ map to $\sigma(am)-a\sigma(m)\in L_0$. Then $\phi(\overline{\xi})(a)(m)=\xi(x)$. It is reasonable to believe that $\phi$ is an isomorphism, and indeed it is. This also generalizes to any higher order derived functors: Both $Ext^1_A(M,-)$ and $\HH^1(A,\Hom_k(M,-)$ are universal delta functors, agreeing on the first term, thus isomorphic.

So we can equally well work in the Yoneda complex, with homomorphisms $d_i:L_i\rightarrow L_{i-1}$ being matrices, as each $L_i$ is assumed to be free. In the above diagram, choose the obvious liftings $d_i^S$ to $S$. Then the obstruction for lifting $M_R$ to $M_S$ via $\pi$ is represented by $$o(M_R,\pi)=\{d_{i-1}^Sd_i^S\}\in\Hom^2(L.,L.)\otimes_k I$$ which is a $2$-cocycle in the Yoneda complex. This is the theory we are going to generalize to the $r$-pointed situation.

Consider then the category of $r$-pointed, Artinian $k$-algebras which we treated above. Let $\{V_1,\dots,V_r\}$ be a set of $r$ $A$-modules and put $V=\oplus_{i=1}^r V_i$. Then a deformation $V_S$ of $V$ to $S$ is a $S\otimes_k A$-module $V_S$, flat over $S$, such that $k^r\otimes_S V_S\cong V$ as $A^r$-modules. As before, two deformations $V_S$ and $V_S^\prime$ are equivalent, if there exists an isomorphism $\phi$ of $S\otimes_k A$-modules commuting in the diagram $$\xymatrix{V_S\ar[rd]\ar[rr]^\phi&&V_S^\prime\ar[dl]\\&V.&}$$

As in the commutative situation, we let flatness (and we can prove that it in fact is) be equivalent to,  as left $S$-module, $$V_S\cong S\otimes_{k^r}V\cong (S_{ij}\otimes_k V_j)_{1\leq i,j\leq r}.$$

Given a small, surjective homomorphism $0\rightarrow I\rightarrow S\overset\pi\rightarrow R\rightarrow 0$ in $a_r$. To give a lifting of $V_R$ to $V_S$ is to give an $A^r$-module structure on $(S_{ij}\otimes_k V_j)_{1\leq i,j\leq r}$, lifting the action on $V_R$, which is a $k$-algebra homomorphism $\sigma_S$ commuting in the diagram $$\xymatrix{A^r\ar[r]^-{\sigma_S}\ar[dr]_{\sigma_R} &\End_k((S_{ij}\otimes_k V_j))\ar[d]\\&\End_k((R_{ij}\otimes_k V_j)).}$$
As the $A$-action is assumed to commute with the $S$ and $R$-actions, by associativity, this is to give, for each $a\in A^r$, a $k$-linear homomorphism $\sigma_a:V\rightarrow (S_{ij}\otimes_k V_j)$. Also, as for each idempotent $e_i\in S$, $\sigma_a(e_i\cdot v)=e_i\sigma_a(v)$, this is equivalent to give a $k$-linear homomorphism $\sigma_a:V_i\rightarrow S_{ij}\otimes_k V_j$ for each $a\in A$. Using this exactly as in the commutative situation, we get the natural $k$-linear lifting of $\sigma_R$ to $S$, everything is fulfilled but the associativity, and we get an obstruction $$o(V_R,\pi)=(o_{ij})\in(\HH^2(A, \Hom_k(V_i,V_j)\otimes_k I_{ij})),$$ where $I=(I_{ij})$ is the kernel of $\pi$, such that $V_R$ can be lifted to $V_S$ if and only if $o(V_R,\pi)=0$.

We have to replace $k[\varepsilon]$ in the $r$-pointed situation. The new basic element in $a_r$ is denoted \textit{the test algebra}, and is not surprisingly given as
$$k[\varepsilon]=\left(\begin{matrix}k\langle t_{11}\rangle&\cdots&kt_{1r}\\\vdots\ddots\vdots\\kt_{r1}&\cdots&k\langle t_{rr}\rangle\end{matrix}\right)/(t_{ij})^2.$$

The tangent space of the deformation functor is then $\Def_{V}(k[\varepsilon_{ij}])$, and again it can be seen that this is isomorphic to the matrix $(\HH^1(A,\Hom_k(V_i,V_j)))$.

To find the correspondence as above, we use free resolutions: For each $V_i$ we choose free resolutions $0\leftarrow V_i\leftarrow L.^i$ with differential $d.^i$, we put $L.=\oplus_{i=1}^r L.^i$, and think of this as a free resolution of $V$ with differential $$d.=\left(\begin{matrix}d.^1&\cdots&0\\\vdots&\ddots&\vdots\\0&\cdots&d.^r\end{matrix}\right).$$ Any morphism $\phi:L_i\rightarrow L_{i-1}$ can be represented by a matrix $\phi=(\phi_{ij})^T$ where $\phi_{ij}:V_i\rightarrow V_j$. Note that multiplying from the left, we have to transpose the matrices. So in our case, we use "matrices of matrices". Then all computations, all choices of bases etc. can be done exactly as in the case with one-pointed algebras. The notation is somewhat more cumbersome because of the matrix expressions, but that is not a problem as one will see from the examples.

\section{Incidence-free examples }
\subsection{Obstruction theory in the Hochscild complex}

Let $\mathcal M=\left(\begin{matrix}k[t_{11}]&k\cdot t_{12}\\k\cdot t_{21}&k[t_{22}]\end{matrix}\right)/(t_{11}t_{12}-t_{12}t_{22}).$ The points are the simple modules, i.e. the points on the two lines, and we are going to compute their local formal moduli using the Hochshild cohomology. As always, we start with the most general points: $$V_1=k[t_{11}]/(t_{11}),\text{}V_2=k[t_{22}]/(t_{22}),$$ $V=V_1\oplus V_2$, $\mathcal V=\{V_1,V_2\}$. We know the tangent space, $$\HH^1(\mathcal M,\Hom_k(V_i,V_j))=\left(\begin{matrix}k\cdot d_{11}&k\cdot d_{12}\\k\cdot d_{21}&k\cdot d_{22}\end{matrix}\right).$$ It remains to compute the cup and Massey products. The following computation is just to get hold on these in the Hochschild cohomology: To do the illustration we take the most easy case $S=S_2=k[t]/(t^3)\rightarrow k[\varepsilon]=S_1$. Then a lifting $V_S$ is given by $\sigma_S:A\rightarrow\End_k(V_S)$, inducing, for each $a\in A$, $\sigma_S(a):V\rightarrow S\otimes_k V.$ We write $\sigma_S(a)(v)=1\otimes av+t\otimes\sigma_a(v)$. For the following, it is essential to notice that we work with right $A$-modules. The coboundary condition is then given by the following:

$$\begin{aligned}
\sigma_S(ab)=\sigma_S(a)\sigma_S(b)&\Leftrightarrow\sigma_S(ab)(v)=\sigma_S(b)(\sigma_S(a)(v))\Leftrightarrow\\
1\otimes vab+t\otimes\sigma_{ab}(v)&=\sigma_S(b)(1\otimes va)+t\otimes\sigma_b(va)\\&+(t\otimes 1)(b\sigma_a(v)+t\otimes\sigma_b(\sigma_a(v))\Leftrightarrow\\
t\otimes\sigma_{ab}(v)&=t\otimes\sigma_b(va)+t\otimes b\sigma_a(v)+t^2\otimes\sigma_b(\sigma_a(v))\Leftrightarrow\\
t\otimes(\sigma_{ab}(v)&-\sigma_b(va)-b\sigma_a(v))-t^2\otimes\sigma_b(\sigma_a(v))=0.
\end{aligned}$$

So, not surprisingly, we see that the "linear term morphism" $\sigma:A\rightarrow\End_k(V,V)$ satisfies
$$a\sigma(b)-\sigma(ab)+\sigma(a)b=0.$$

This says that if $V_S$ is a lifting, then the linear term $\sigma$ is a derivation. Also the second order term is $\sigma(a)\sigma(b)$, which is the cup-product. It is quite clear how to generalize this to the test algebra, thus it makes it possible to define Cup and Generalized Massey Products.

We recall, as above, that for $\phi:S\rightarrow\End_k(V_S),$ $d\phi:S\otimes_k S\rightarrow\End_k(V_S)$ is given by
$$d\phi(s_1\otimes s_2)=s_1\phi(s_2)-\phi(s_1s_2)+\phi(s_1)s_2.$$

Thus, in particular $d(t_{ij}^\ast)(t_{ij}\otimes 1)=d(t_{ij}^\ast)(1\otimes t_{ij})=0$, and so $d(t_{ij}^\ast)=0$. On the other hand, we find, in the case of a free free $k^r$-algebra,

$$d(t_{ij}t_{jk})^\ast(t_{ij}\otimes t_{jk})=-(t_{ij}\otimes t_{jk})^\ast,$$ in fact forcing all cup-products to be zero.

In this situation with a relation, the above is true in all cases but the one case due to the relation with:

$$d(t_{11}t_{12})^\ast=(t_{11}\otimes t_{12})^\ast-(t_{12}\otimes t_{22})^\ast.$$ We find that due to the above, the cup-products are given by

$$\begin{aligned}
d_{t_{11}}\cup d_{t_{12}}(t_{11}\otimes t_{12})&=1\Rightarrow d_{t_{11}}\cup d_{t_{12}}=(t_{11}\otimes t_{12})^\ast\\
d_{t_{12}}\cup d_{t_{22}}(t_{12}\otimes t_{22})&=1\Rightarrow d_{t_{12}}\cup d_{t_{22}}=(t_{12}\otimes t_{22})^\ast
\end{aligned}$$

This says that $$d_{t_{11}}\cup d_{t_{12}}=-d_{t_{12}}\cup d_{t_{22}}.$$ This gives the relation $r(\uu)=u_{11}u_{12}-u_{12}u_{22}$.

Put $$\mathcal U=\left(\begin{matrix}k[u_{11}]&u_{12}\\u_{21}&k[u_{22}]\end{matrix}\right)/(r(\uu)),$$ define $\sigma_S:\mathcal M\rightarrow \End_k(V_{\mathcal U})$ by

$$\sigma_{\mathcal U}(a)(v)=1\otimes\rho(a)(v)+\sum u_{ij}\otimes d_{t_{ij}}+u_{11}u_{12}\otimes(t_{11}t_{12})^\ast.$$

In particular, notice that this gives the initial $k$-algebra back, highlighting the closure theorem in Laudal's exposition \cite{Laudal03}.

We check that this gives the decided lifting, "all the way to the top", and so the local formal moduli in this case is $\mathcal U$, with the proversal deformation defined by liftings to the top. Note that we have to choose an actual, nonzero, linear morphism killing the obstruction at each step. It must be proved that this is possible, and it follows by induction. In the next example, working with matrices and complexes, this is no longer necessary.

\subsection{Deformation theory in the Yoneda complex: The pair (line, point)}

We consider the plane $A=k[x,y]$, the $x$-axis $V_1=k[x,y]/(y)$, and the origin $V_2=k[x,y]/(x,y)$.

We put $V=V_1\oplus V_2$ and constructs the following resolution

$$\xymatrix{0&\ar[l]V&\ar[l]_{\begin{tiny}\left(\begin{matrix}1&0\\0&1\end{matrix}\right)\end{tiny}}A\oplus A&\ar[l]_{\begin{tiny}\left(\begin{matrix}{\left(\begin{matrix}y\end{matrix}\right)}&0\\0&{\left(\begin{matrix}x&y\end{matrix}\right)}\end{matrix}\right)\end{tiny}}A\oplus A^2&\ar[l]_{\begin{tiny}\left(\begin{matrix}{\left(\begin{matrix}0\end{matrix}\right)}&0\\0&{\left(\begin{matrix}y\\-x\end{matrix}\right)}\end{matrix}\right)\end{tiny}}A&\ar[l]0}$$

\begin{lemma}\label{cllemma}
The tangent space of the deformation functor of  $V$ is the following:
$$\begin{aligned}\Ext^1_A(V_1,V_1)&=V_1,\text{}\Ext^1_A(V_1,V_2)=V_2\cong k,\\
\Ext^1_A(V_2,V_1)&\cong k,\text{}\Ext^1_A(V_2,V_2)=k^2.
\end{aligned}$$
\end{lemma}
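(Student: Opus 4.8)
The plan is to read off all four groups directly from the displayed free resolution, which by construction has the block form $L_\bullet = L_\bullet^1\oplus L_\bullet^2$ with block-diagonal differential, where $L_\bullet^1\colon 0\leftarrow V_1\leftarrow A\xleftarrow{\,y\,}A\leftarrow 0$ is the Koszul resolution of the line and $L_\bullet^2\colon 0\leftarrow V_2\leftarrow A\xleftarrow{(x\ y)}A^2\xleftarrow{\binom{y}{-x}}A\leftarrow 0$ is the Koszul resolution of the origin (both exact since $y$ and $x,y$ are regular sequences in $A$). Hence $\Ext^1_A(V_i,V_j)=H^1\bigl(\Hom_A(L_\bullet^i,V_j)\bigr)$, and the four computations split into the two families $\Ext^1_A(V_1,-)$ and $\Ext^1_A(V_2,-)$. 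Throughout I work with right $A$-modules and transpose the matrices when composing on the left, as flagged in Section 3; the only module-structure facts used are that $y$ annihilates both $V_1$ and $V_2$, that $x$ annihilates $V_2$, and that $x$ acts on $V_1\cong k[x]$ by multiplication.

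First I would treat $\Ext^1_A(V_1,V_j)$. Applying $\Hom_A(-,V_j)$ to $A\xleftarrow{\,y\,}A$ yields the two-term complex $V_j\xrightarrow{\,\cdot y\,}V_j$ concentrated in degrees $0,1$, so $\Ext^1_A(V_1,V_j)=\operatorname{coker}(\cdot y)=V_j/yV_j=V_j$ because $y$ acts as zero. Taking $j=1$ gives $\Ext^1_A(V_1,V_1)=V_1$, and taking $j=2$ gives $\Ext^1_A(V_1,V_2)=V_2\cong k$.

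Next I would treat $\Ext^1_A(V_2,V_j)$. Applying $\Hom_A(-,V_j)$ to the Koszul complex on $x,y$ gives $V_j\xrightarrow{\,v\mapsto(xv,\,yv)\,}V_j^{\,2}\xrightarrow{\,(v_1,v_2)\mapsto yv_1-xv_2\,}V_j$, and $\Ext^1_A(V_2,V_j)$ is the homology at the middle term. For $j=2$ all three arrows vanish since $x,y$ act as zero on $V_2$, so $\Ext^1_A(V_2,V_2)=V_2^{\,2}=k^2$. For $j=1$, with $V_1\cong k[x]$, the incoming map is $f\mapsto(xf,0)$ with image $xV_1\times 0$, and the outgoing map is $(f,g)\mapsto -xg$, whose kernel is $V_1\times 0$ because $k[x]$ is a domain; therefore $\Ext^1_A(V_2,V_1)\cong V_1/xV_1\cong k$. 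Assembling the four values gives the statement.

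The computation is entirely routine. The only points demanding care are the bookkeeping of the left/right actions and transposes in $\Hom_A(L_\bullet^i,V_j)$, and the fact that in the one case with an infinite-dimensional target, $\Ext^1_A(V_2,V_1)$, one must genuinely identify the kernel and image as $k[x]$-submodules of $V_1^{\,2}$ rather than conclude by a dimension count.
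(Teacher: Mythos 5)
Your proposal is correct and follows essentially the same route as the paper: apply $\Hom_A(-,V_j)$ to the displayed block-diagonal resolution (the Koszul resolutions of $V_1$ and $V_2$), compute the four components separately, and in the one nontrivial case $\Ext^1_A(V_2,V_1)$ identify the kernel of $(f,g)\mapsto -xg$ and the image of $f\mapsto(xf,0)$ inside $V_1^2$ using that $x$ is a nonzerodivisor on $V_1\cong k[x]$, exactly as the paper's explicit calculation does. No gaps.
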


\begin{proof}
Taking the $\Hom(-,V)$ of the sequence, computing componentwise, we get:

$\Ext^1_A(V_1,V_1):$ $\xymatrix{V_1\ar[r]^0&V_1\ar[r]&0}\Rightarrow\Ext^1_A(V_1,V_1)=V_1,$

$\Ext^1_A(V_1,V_2):$ $\xymatrix{V_2\ar[r]^0&V_2\ar[r]&0}\Rightarrow\Ext^1_A(V_1,V_1)=V_2\cong k,$

$\Ext^1_A(V_2,V_1):$ $\xymatrix{V_1\ar[r]^{\begin{small}\left(\begin{matrix}x\\0\end{matrix}\right)\end{small}}&
V_1^2\ar[r]^{\begin{small}\left(\begin{matrix}0&-x\end{matrix}\right)\end{small}}&V_1\ar[r]&0}.$ We give the straight forward computation:

$$\left(\begin{matrix}0&-x\end{matrix}\right)\left(\begin{matrix}\overline{v}_1\\\overline{v}_2\end{matrix}\right)=0\Longleftrightarrow -x\overline{v}_2=0\Leftrightarrow -xv_2=hy\Leftrightarrow v_2\in(y)\Leftrightarrow\overline{v}_2=0.$$ Thus the kernel is the set of elements on the form $(\overline{v}_1,0)$, the image is the elements on the form $(xv,0)$ so that $\Ext^1_A(V_2,V_1)=\langle(\alpha,0)\rangle\cong k.$

$\Ext^1_A(V_2,V_2):$ $\xymatrix{V_2\ar[r]^0&V_2^2\ar[r]^0&V_2}\Rightarrow\Ext^1_A(V_2,V_2)\cong V_2^2\cong k^2.$
\end{proof}

We have computed the tangent space of the deformation functor. A line can of course be deformed flatly into any other curve passing through the origin. This is the result of $\Ext^1_A(V_1,V_1)=V_1.$ In this example, we are interested in deformations of lines, thus we will only consider the deformations of the line that are also lines. This equals the linear homogeneous deformations, and we choose $x\in V_1=\Ext^1_A(V_1,V_1)_{(1)}$ as our tangent direction. So for this example, our free non-commutative algebra with $H/\rad(H)^2=S/\rad(S)^2$ is $$S=\left(\begin{matrix}k\langle t_{11}\rangle&t_{12}\\t_{21}&k\langle t_{22}(1),t_{22}(2)\rangle\end{matrix}\right).$$

Now we will give the Yoneda representation of the tangent space. Recall that the Yoneda complex for a resolution $L.$ of $M$ is given as
$$\Hom^p_A(L.,L.)=\{\xi_{i}:L_{i+p}\rightarrow L_i\}_i,$$ with differential $d^p:\Hom^p(L.,L.)\rightarrow\Hom^{p+1}(L.,L.)$ given by
$$d^p(\{\xi\}=\{\xi_i\circ d-(-1)^p d\circ\xi_{i+1}\}.$$

We  illustrate this with the following obvious diagrams:

$$\begin{small}\xymatrix{0&\ar[l]V_1&\ar[l]A&A\ar[l]_{\cdot y}\ar@{-->}[dll]\ar[dl]_{\cdot x}&\ar[l]0\\0&\ar[l]V_1&\ar[l]A&\ar[l]A&\ar[l]0}\end{small}
\text{ }\begin{small}\xymatrix{0&\ar[l]V_1&\ar[l]A&A\ar[l]_{\cdot y}\ar@{-->}[dll]\ar[dl]_{\cdot 1}&\ar[l]0\\0&\ar[l]V_2&\ar[l]A&\ar[l]A^2&\ar[l]A&\ar[l]0}\end{small}$$

$$\xymatrix{0&\ar[l]V_2&\ar[l]A&\ar@{-->}[dll]\ar[dl]_{\begin{tiny}\left(\begin{matrix}1&0\end{matrix}\right)\end{tiny}}
\ar[l]_{\left(\begin{matrix}x&y\end{matrix}\right)}A^2
&\ar[dl]^{-1}\ar[l]_{\left(\begin{matrix}y\\-x\end{matrix}\right)}A&\ar[l]0\\0&\ar[l]V_1&\ar[l]A&\ar[l]_{y}A&\ar[l]0}
$$

$$\xymatrix{0&\ar[l]V_2&\ar[l]A&\ar@{-->}[dll]\ar@<1ex>[dl]_{\begin{tiny}\left(\begin{matrix}1&0\end{matrix}\right)\end{tiny}}
\ar[dl]^{\begin{tiny}\left(\begin{matrix}0&1\end{matrix}\right)\end{tiny}}\ar[l]_{\left(\begin{matrix}x&y\end{matrix}\right)}A^2
&\ar[l]_{\left(\begin{matrix}y\\-x\end{matrix}\right)}\ar@<1ex>[dl]_{\begin{tiny}\left(\begin{matrix}0\\-1\end{matrix}\right)\end{tiny}}\ar[dl]^{\begin{tiny}\left(\begin{matrix}1\\0\end{matrix}\right)\end{tiny}}A&\ar[l]0\\0&\ar[l]V_2&\ar[l]A&\ar[l]^{\left(\begin{matrix}x&y\end{matrix}\right)}A^2
&\ar[l]^{\left(\begin{matrix}y\\-x\end{matrix}\right)}A&\ar[l]0.}$$

Given the Yoneda representation of the chosen tangent space, we might compute the 2. order Massey Products (the cup products). To make clear how morphisms are composed, notice the following illustrative way of thinking:

$$L_2\rightarrow S\otimes_kL_1\rightarrow S\otimes_k S\otimes_k L_0$$ sends $l$ to the following sequence: $$l\mapsto\ut_1\otimes\alpha_{\ut_1}(l)\mapsto\ut_1(\ut_2\otimes\alpha_{\ut_2}(\alpha_{\ut_1}(l)))=\ut_1\ut_2\otimes\alpha_{t_2}(\alpha_{\ut_1}(l)).$$
(So multiplying column vectors from the left gives us again that the last should be the first). We get the following results:

$$\begin{aligned}\langle t_{11}^2\rangle&=0,\text{}\langle t_{11}t_{12}\rangle=0,\text{}\langle t_{22}(1)t_{21}\rangle=0,\\
\langle t_{22}(1)^2\rangle&=0,\text{}\langle t_{22}(2)t_{22}(1)\rangle=1,\text{}\langle t_{12}t_{21}\rangle=0,\\
\langle t_{12}t_{22}(1)\rangle&=0,\text{}\langle t_{22}(2)t_{21}\rangle=1,\text{}\langle t_{22}(2)^2\rangle=0,\\
\langle t_{21}t_{12}\rangle&=-1,\text{}\langle t_{11}^2\rangle=0,\text{}\langle t_{12}t_{22}(2)\rangle=0,\\
\langle t_{21}t_{11}\rangle&=-x,\text{}\langle t_{22}(1)t_{22}(2)\rangle=-1,\text{}\langle t_{21}t_{11}(2)\rangle=0.
\end{aligned}$$

Then this is nearly as simple as it can be, every cup product are zero or a base element, as far as $\langle t_{21}t_{11}\rangle=-x=0\in\Ext^2_A(V_2,V_1),$ forcing us to choose the following 2. order definining system:

$$\xymatrix{0&\ar[l]V_2&\ar[l]A&\ar[l]_{\left(\begin{matrix}x&y\end{matrix}\right)}\ar[dl]_{\begin{small}\left(\begin{matrix}0&-1\end{matrix}\right)\end{small}}A^2&\ar[l]_{\left(\begin{matrix}y\\-x\end{matrix}\right)}\ar[dl]^0\ar@{-->}[dll]_{-x}A&\ar[l]0\\
0&\ar[l]V_1&\ar[l]A&\ar[l]^{y}A&\ar[l]0&\ar[l]0.}$$ This means that $\alpha_{t_{21}t_{11}=}\begin{cases}\left(\begin{matrix}0&-1\end{matrix}\right)\\0\end{cases},$ and that all the rest of the 2. order defining systems can be chosen to be $0$. The only 3. order Massey products to be computed are then:
$$\langle t_{21}t_{11}t_{11}\rangle=0,\text{}\langle t_{22}(1)t_{21}t_{11}\rangle=1,\text{}\langle t_{22}(2)t_{21}t_{11}\rangle=0.$$

As the remaining differences are trivial cohomology classes, identically $0$, we have the following result

\begin{proposition}The versal base space of a line through the origin and a point is
$$H\simeq\left(\begin{matrix}k[t_{11}]&t_{12}\\t_{21}&k\langle t_{22}(1),t_{22}(2)\rangle\end{matrix}\right)/(t_{22}(2)t_{22}(1)-t_{22}(1)t_{22}(2)-t_{21}t_{12},t_{22}(2)t_{21}+t_{22}(1)t_{21}t_{11})$$
\end{proposition}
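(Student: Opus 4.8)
The plan is to assemble the versal base space $H$ from the cohomological data computed above, following Laudal's obstruction calculus in the Yoneda complex exactly as in the one-pointed case, but carried out over the test algebra $k[\varepsilon_{ij}]$ for the $2$-pointed family $\mathcal V=\{V_1,V_2\}$. Since we only deform the line through linear homogeneous directions, the relevant first-order data is the subspace of the tangent space spanned by $d_{t_{11}}$ (the direction $x\in\Ext^1_A(V_1,V_1)_{(1)}$), $d_{t_{12}}$, $d_{t_{21}}$, and $d_{t_{22}}(1),d_{t_{22}}(2)$, so the free $k^r$-algebra with the correct $\rad/\rad^2$ is exactly the $S$ displayed before the proposition. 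The target is to show that the obstruction morphism $o\colon T^1\to T^2$, dualized, produces precisely the two relations listed, and that the proversal family lifts all the way to $H$.

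First I would fix the free resolution $L.=L.^1\oplus L.^2$ of $V=V_1\oplus V_2$ displayed above, and for each generator $t_{ij}$ (resp. $t_{22}(l)$) fix the first-order Yoneda cocycle representative $\alpha_{t_{ij}}\in\Hom^1_A(L.,L.)$ read off from the four illustrative lifting diagrams: $\alpha_{t_{11}}$ is multiplication by $x$ on $L.^1$, $\alpha_{t_{12}}$ is the map $A\to A^2$ of $L.^1\to L.^2$ given by the appropriate component, $\alpha_{t_{21}}$ is given by $\bigl(\begin{smallmatrix}1&0\end{smallmatrix}\bigr)$ and $-1$ as in the third diagram, and $\alpha_{t_{22}}(1),\alpha_{t_{22}}(2)$ by the two independent liftings in the fourth diagram. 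Then I would compute all second-order cup products $\langle t_a t_b\rangle=\alpha_{t_b}\circ\alpha_{t_a}$ (recalling the left-multiplication/transpose convention, so "the last is first"); this is the table already displayed. The only nonvanishing ones up to the coboundary ambiguity are $\langle t_{22}(2)t_{22}(1)\rangle=1=-\langle t_{22}(1)t_{22}(2)\rangle$, $\langle t_{21}t_{12}\rangle=-1$, $\langle t_{22}(2)t_{21}\rangle=1$, and the problematic $\langle t_{21}t_{11}\rangle=-x$, which is a coboundary in $\Ext^2_A(V_2,V_1)$ (indeed $\Ext^2_A(V_2,V_1)=0$ by the resolution, since the last term $A\to A^2$ of $L.^2$ dualizes against $V_1$ to a complex with zero cohomology in degree $2$). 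The first batch gives directly the degree-two part of the relation ideal: dualizing $\langle t_{22}(2)t_{22}(1)\rangle + \langle t_{22}(1)t_{22}(2)\rangle$... careful, they cancel, giving the commutator relation $t_{22}(2)t_{22}(1)-t_{22}(1)t_{22}(2)$; and $\langle t_{21}t_{12}\rangle=-1$ together with the vanishing of $\langle t_{22}(2)t_{22}(1)\rangle$-type contributions yields the term $-t_{21}t_{12}$, so the first relation $r_1(\ut)=t_{22}(2)t_{22}(1)-t_{22}(1)t_{22}(2)-t_{21}t_{12}$ appears. Meanwhile $\langle t_{22}(2)t_{21}\rangle=1$ contributes the leading term $t_{22}(2)t_{21}$ of the second relation.

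Next I would handle the one genuine higher-order obstruction. Because $\langle t_{21}t_{11}\rangle=-x$ is a nonzero cochain killed only by a chosen nonzero linear (degree-one) homotopy, I must exhibit an explicit second-order defining system: the chain map $\alpha_{t_{21}t_{11}}$ with $L_2$-component $\bigl(\begin{smallmatrix}0&-1\end{smallmatrix}\bigr)$ and $L_1$-component making the square commute, as in the displayed $2$-order diagram, with all other $2$-order systems taken to be $0$. Then the only surviving third-order Massey products are $\langle t_{21}t_{11}t_{11}\rangle$, $\langle t_{22}(1)t_{21}t_{11}\rangle$, $\langle t_{22}(2)t_{21}t_{11}\rangle$, whose values $0,1,0$ are computed from composing $\alpha_{t_{21}t_{11}}$ with the relevant first-order cocycles. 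The value $1$ for $\langle t_{22}(1)t_{21}t_{11}\rangle$ contributes the term $t_{22}(1)t_{21}t_{11}$, completing the second relation $r_2(\ut)=t_{22}(2)t_{21}+t_{22}(1)t_{21}t_{11}$. Finally I would check that with these choices all further Massey products / obstruction differences are trivial cohomology classes (identically zero in the respective $\Ext^2$), so the obstruction calculus terminates: the map $\sigma_{H}\colon A\to\hat{\mathcal O}_{\mathcal V}$ built from $\sigma_H(a)(v)=1\otimes\rho(a)(v)+\sum u_{ij}\otimes\alpha_{t_{ij}}(\cdots)+u_{21}u_{11}\otimes\alpha_{t_{21}t_{11}}(\cdots)$ is a well-defined $k$-algebra homomorphism lifting $\rho$ all the way up, and $H$ with this family is the (mini-)versal object. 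Versality (not just formal smoothness of the induced map on tangent spaces) follows because at every step the obstruction is computed in a $T^2$ that is spanned exactly by the relations we have imposed, so $H$ pro-represents the hull; this is the part of Laudal's general machinery invoked here rather than reproved.

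The main obstacle I expect is the bookkeeping around the single obstructed product $\langle t_{21}t_{11}\rangle=-x$: one must verify that $-x$ really is a coboundary in $\Ext^2_A(V_2,V_1)$, produce the correct homotopy $\bigl(\begin{smallmatrix}0&-1\end{smallmatrix}\bigr)$ with a commuting square, and then track this choice consistently through all third-order products so that no spurious degree-three relation and no fourth-order obstruction appears. Everything else — the first-order tangent space (Lemma \ref{cllemma}), the cup-product table, the vanishing of the remaining higher obstructions — is routine once the resolution and the sign/transpose conventions are fixed.
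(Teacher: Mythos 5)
Your overall route is the same as the paper's: fix the resolution $L.=L.^1\oplus L.^2$, represent the chosen tangent basis by Yoneda cocycles, compute the cup-product table, kill $\langle t_{21}t_{11}\rangle=-x$ by an explicit second-order defining system with homotopy $\bigl(\begin{smallmatrix}0&-1\end{smallmatrix}\bigr)$, read off the third-order products $\langle t_{21}t_{11}t_{11}\rangle=0$, $\langle t_{22}(1)t_{21}t_{11}\rangle=1$, $\langle t_{22}(2)t_{21}t_{11}\rangle=0$, and invoke Laudal's machinery to identify the dualized obstruction classes with the two relations. However, there is one genuine error in your justification: you assert parenthetically that $\Ext^2_A(V_2,V_1)=0$. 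This is false. Applying $\Hom_A(-,V_1)$ to the Koszul resolution of $V_2$, the last map is $V_1^2\to V_1$, $(\phi_1,\phi_2)\mapsto \phi_1y-\phi_2x=-\phi_2x$ in $V_1=k[x]$, whose image is the ideal $(x)$; since $L_3=0$, every element of $\Hom_A(L_2^2,V_1)\cong V_1$ is a cocycle, so $\Ext^2_A(V_2,V_1)\cong k[x]/(x)\cong k$, generated by the class of the constant $1$. What is true, and what you actually need, is only that the particular cocycle $-x$ is a coboundary (it lies in $(x)$, and the homotopy $\bigl(\begin{smallmatrix}0&-1\end{smallmatrix}\bigr)$ exhibits this), which is exactly the paper's statement ``$-x=0\in\Ext^2_A(V_2,V_1)$'' and its choice of defining system.

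This is not a harmless slip, because your own argument contradicts it two sentences later: the second relation $t_{22}(2)t_{21}+t_{22}(1)t_{21}t_{11}$ sits in the $(2,1)$ entry of $H$, hence corresponds dually to a nonzero obstruction class in $\Ext^2_A(V_2,V_1)$, and you use precisely the fact that $\langle t_{22}(2)t_{21}\rangle=1$ and $\langle t_{22}(1)t_{21}t_{11}\rangle=1$ survive in cohomology to produce it. If $\Ext^2_A(V_2,V_1)$ were zero, these classes would vanish and no relation could appear in that entry, so the proposition you are proving would fail. The correct picture is that in $\Ext^2_A(V_2,V_1)\cong k[x]/(x)$ the class of $x$ dies (hence $\langle t_{21}t_{11}\rangle$ is unobstructed and only forces the nontrivial defining system) while the class of $1$ survives (hence the degree-two and degree-three terms of the second relation). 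With that correction, your argument coincides with the paper's proof.
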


We can interpret this result geometrically: Putting $t_{12}=t_{21}=0$ we get families

(1,1): $k[x,y]/(t_{11}x+y),\text{}t_{11}\in k$: Lines with slope $t_{11}$.

(2,2): $k[x,y]/(x+t_{22}(1),y+t_{22}(2)$: Points $(-t_{22}(1),-t_{22}(2))$.

\noindent The variables $t_{12},t_{21}$ tells how the objects are related at tangent level.

$$t_{22}(2)t_{22}(1)-t_{22}(1)t_{22}(2)-t_{21}t_{12}$$ gives no forced tangent relations, it is just a description of the geometry.

$$t_{22}(2)t_{21}+t_{22}(1)t_{21}t_{11}\Rightarrow t_{21}(t_{11}t_{22}(1)+t_{22}(2))=t_{21}(t_{11}x+y)$$ means that the constant $\ext$-locus consists of points on the line, (the line). So this is the moduli of all pairs of a point and a line through the origin. In particular, the $\Ext$-dimension is correct for points on the line.

Maybe, to the above, we should give the computation:
$$\begin{aligned}\delta(t_{22}(2)t_{21}+t_{22}(1)t_{21}t_{11})=0\Leftrightarrow t_{22}(2)\delta(t_{21})+t_{22}(1)\delta(t_{21}t_{11})=0\Leftrightarrow\\
t_{22}(2)\delta(t_{21})+t_{22}(1)\delta(t_{21})t_{11}=0\Leftrightarrow\delta(t_{21})(t_{22}(2)+t_{22}(1)t_{11})
\end{aligned}$$

\section{Deformations and interactions}
\subsection{The category $a_\Gamma$}

\begin{definition} A \textit{diagram} $\uc$ of right $A$-modules consists of a family $|\uc|$ of right $A$-modules, together with a set $\Gamma(V,W)\subseteq\Hom_A(V,W)$ of $A$-module homomorphisms for each pair of modules $ V,W\in |\uc|$.
\end{definition}

If $A=k$, this is called a representation of the corresponding quiver, i.e. the quiver with $|\uc|$ as set of nodes and $\Gamma=\underset{V,W\in |\uc|}\bigcup\Gamma(V,W)$ as arrows. Throughout, we will just use the notation $\Gamma$ for the corresponding quiver.

We let $k[\Gamma]$ denote the the quiver algebra. By definition, the quiver algebra is the $k$-algebra generated by all finite paths $\gamma_1\gamma_2\cdots\gamma_i\gamma_{i+1}\cdots\gamma_n$ such that the head of $\gamma_i$ is the tail of $\gamma_{i+1}$. Note that $e_i$, the identity at the node $i$, is considered as a finite path. Thus $k[\Gamma]$ is isomorphic to $k^r[\Gamma]$.

\begin{example}
$$\xymatrix{V_1\ar[r]^{\gamma_{12}}&V_2\ar[d]^{\gamma_{23}}\\&V_3.
}$$ Then $k[\Gamma]$ is the matrix algebra consisting of all matrices on the form
$\left(\begin{matrix}k&k&k\\ 0&k&k\\0&0&k\end{matrix}\right).$
\end{example}

\begin{example}
$$\xymatrix{V_1\ar[r]^{\gamma_{12}}\ar[dr]_{\gamma_{13}}&V_2\ar[d]^{\gamma_{23}}\\&V_3.
}$$ Then $k[\Gamma]$ is the matrix algebra consisting of all matrices on the form
$\left(\begin{matrix}k&k&k\oplus k\\ 0&k&k\\0&0&k\end{matrix}\right).$
\end{example}

\begin{definition} The category $a_\Gamma$ of pointed Artinian $\Gamma$-algebras is the category of $k[\Gamma]$-algebras fitting into the diagram $$\xymatrix{k[\Gamma]\ar[r]\ar[dr]_{\Id}&S\ar[d]^\rho\\&k[\Gamma],}$$ and such that $\rad(S)^n=(\ker\rho)^n=0$. The morphisms of $a_\Gamma$ are the commuting $k[\Gamma]$-homomorphisms.
\end{definition}

We notice that when $\Gamma=\emptyset$, that is a diagram with a trivial quiver (no morphisms but the identities at each node), then $k[\Gamma]=k^r$ so that this definition is a generalization of the $r$-pointed algebras. Also notice that by  Lemma \ref{Fausklemma}, $S$ has exactly $r$ simple modules.

For the deformation theory, we recall that in the discrete situation, i.e. $\Gamma=\emptyset$, we considered $V=\underset{i=1}{\overset r\oplus}V_i$ as an $A^r$-module, and that a lifting $V_S$ of $V$ to $S\in\ob(a_r)$ is a $S\otimes_{k}A$-module satisfying $k^r\otimes_SV_S\cong V$. So what we do, is to consider $V$ as an $k^r\otimes_k A$-module. This makes perfectly sense also in the situation with nontrivial quivers:

We consider $\uc$ as a $k[\Gamma] \otimes_k A$-module, for short, as an $A[\Gamma]$-module, by letting the elements in $\Gamma$ act by right multiplication. That is, an element $v=(v_1,\dots,v_r)\in V$ is given the action $$v\cdot\gamma_{ij}(l)=\left(\begin{matrix}v_1&\cdots&v_r\end{matrix}\right)\left(\begin{matrix}&\vdots&\\
\ldots&\gamma_{ij}(l)&\ldots\\&\vdots&\end{matrix}\right)=\left(\begin{matrix}\cdots&v_i\cdot\gamma_{ij}(l)&\cdots\end{matrix}\right)$$ where we let $v_i\cdot\gamma_{ij}(l)=\gamma_{ij}(l)(v_i)$. So $V$ is a right $A$, right $k[\Gamma]$-module, and because $\Gamma$ consists of $A$-linear morphisms, these actions commute. Thus $V$ is an $A[\Gamma]$-module.

We want to generalize the deformation functor $\Def_V:a_r\rightarrow\Sets$ to the category $a_\Gamma$. A deformation of the diagram $\uc$ to an object $S$ in $a_\Gamma$ should be a deformation $V_S$ which is a deformation of $V=|\uc|$ to $S$ as an object in $a_r$, but it should also lift the morphisms in the diagram, i.e. the quiver $\Gamma$ of $\uc$. Here, $V_S$ a lifting of $V=|\uc|$ to $S$ as object in $a_r$, means the natural restriction to $S_r=S/\Gamma$.

\begin{definition} Let $\uc$ be a diagram of $A$-modules. We define $\Def_{\uc}:a_\Gamma\rightarrow\Sets$ by letting a deformation, or lifting, of $\uc$ to $S$ be an $S\otimes_k A$-module $V_S$, flat over $S$, such that $k[\Gamma]\otimes_S V_S\cong\uc$, as an $A[\Gamma]$-module.
\end{definition}

Notice that in the discrete situation, deformation flatness of the lifting $V_S$ of $V$ to $S$ is equivalent to $V_S\cong (S_{ij}\otimes_k V_j)$ as $S$-module, or in fact $V_S\cong_S S\otimes_{k^r}V.$ In the situation with incidences,  for an $A[\Gamma]$-module $V$ we get deformation flatness defined as $V_{S}\cong_{S} S\otimes_{k[\Gamma]}V$.

\begin{example} $A=k$, $V_1=k$, $V_2=k^2$, $\gamma_{12}=\left(\begin{matrix}1\\1\end{matrix}\right)$.
\end{example}

\begin{example} $A=k[x]$, $V_1=A$, $V_2=A/(x-\alpha)$, $\gamma_{12}=\kappa$.
\end{example}

\begin{example} Consider the diagram $\uc=\xymatrix{A\ar[r]^{(1,1)}&A^2}$. Then $k[\Gamma]$ is the $k$-algebra generated over $k$ by the matrices $e_{11}=\left(\begin{matrix}1&0\\0&0\end{matrix}\right)$, $e_{22}=\left(\begin{matrix}0&0\\0&1\end{matrix}\right)$ and $e_{11}\gamma_{12}=\left(\begin{matrix}0&\gamma_{12}\\0&0\end{matrix}\right)$ which we write $k[\Gamma]=\left(\begin{matrix}k&\gamma_{12}\\0&k\end{matrix}\right)$ for short. Then $A[\Gamma]=\left(\begin{matrix}A&\gamma_{12}\\0&A\end{matrix}\right)$.
\end{example}

\begin{lemma} There is an equivalence between the category of finite diagrams $\uc$ ($|\uc|=\{V_1,\dots,V_r\}$) and quiver algebras $A[\Gamma]$ with a finite set of nodes.
\end{lemma}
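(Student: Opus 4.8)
The plan is to exhibit the equivalence explicitly as a pair of functors and show they are mutually quasi-inverse. First I would make both sides into honest categories. On the diagram side, a morphism $\uc\to\uc'$ should be a family of $A$-module maps $f_i:V_i\to V_i'$ that is compatible with the chosen arrows, in the sense that it carries each $\gamma\in\Gamma(V_i,V_j)$ to an element of the $k$-span of $\Gamma'(V_i',V_j')$ via $f_j\circ\gamma = (\text{that element})\circ f_i$; in the "strict" reading one simply asks $f_j\gamma = \gamma' f_i$ for the corresponding arrows, which is what makes the quiver algebra $k[\Gamma]$ the right universal object. On the algebra side, the objects are pairs $(A[\Gamma],\text{module } V)$ — or, if we fix the underlying $A$-modules, just the $k[\Gamma]\otimes_k A$-modules whose restriction to $k^r$ splits as $\oplus V_i$ — and morphisms are the $A[\Gamma]$-linear maps.

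Next I would define the functor $F$ from diagrams to quiver algebras: send $\uc=(|\uc|,\Gamma)$ to the algebra $A[\Gamma]=k[\Gamma]\otimes_k A$ together with the $A[\Gamma]$-module $V=\oplus_i V_i$ on which $\Gamma$ acts by right multiplication, exactly as set up in the paragraph preceding the last Example; a morphism of diagrams goes to the evident $A[\Gamma]$-linear map. Finiteness of $|\uc|$ gives a finite node set, so $k[\Gamma]$ has a finite complete set of orthogonal idempotents $e_1,\dots,e_r$ — this is where I would invoke the earlier remark that $k[\Gamma]\cong k^r[\Gamma]$. In the reverse direction, given a quiver algebra on a finite node set, say $B=A[\Gamma]$ with idempotents $e_i$, I recover $V_i := V e_i$ and, for each generating arrow $\gamma_{ij}(l)$, the $A$-module map $V_i\to V_j$, $v\mapsto v\cdot\gamma_{ij}(l)$; $A$-linearity is automatic because the $A$- and $k[\Gamma]$-actions commute. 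The arrows so produced, together with $|\uc|=\{V_1,\dots,V_r\}$, form a diagram $G(B)$.

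Then I would check $G\circ F\cong\Id$ and $F\circ G\cong\Id$. The first is essentially a bookkeeping identity: starting from $\uc$, building $A[\Gamma]$, decomposing along the $e_i$, and reading off $v\mapsto v\cdot\gamma$ returns precisely the original modules and arrows, because the action was defined by $v_i\cdot\gamma_{ij}(l)=\gamma_{ij}(l)(v_i)$. The second direction says that an abstract finite-node quiver algebra $B$ over $A$ is reconstructed from its idempotent pieces $e_iBe_j$ and the arrows they contain — i.e. $B$ is the path algebra of its own "Gabriel quiver" relative to $A$ — which is a standard Peirce-decomposition argument: write $1=\sum e_i$, so $B=\bigoplus_{i,j}e_iBe_j$, note $e_iBe_i\supseteq A\cdot e_i$ and $e_iBe_j$ for $i\neq j$ is generated as a bimodule by the arrow elements, and the multiplication is exactly composition of paths. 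Functoriality and naturality of the iso on both sides are then routine diagram chases using $A[\Gamma]$-linearity.

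The main obstacle is pinning down the correct notion of morphism on the diagram side so that the equivalence is literally true rather than merely essentially surjective: with the naive "strict" morphisms one gets an equivalence with quiver algebras presented with a distinguished generating set of arrows, while to land on \emph{all} quiver algebras $A[\Gamma]$ one must either allow morphisms that mix the arrows linearly (treating $\Gamma(V_i,V_j)$ as spanning a subspace of $\Hom_A(V_i,V_j)$, as the Definition of a diagram in fact does) or quotient by the obvious redundancy in the choice of generating arrows. I would handle this by phrasing a diagram morphism as a family $(f_i)$ with $f_j\circ\gamma\in\mathrm{span}_k\,\Gamma'(V_i',V_j')\circ f_i$ for all $\gamma\in\Gamma(V_i,V_j)$, which is exactly what $A[\Gamma]$-linearity unwinds to after Peirce decomposition; once the definitions are aligned this way, the remaining work is the two bookkeeping isomorphisms above and is routine.
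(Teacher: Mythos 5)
Your construction is essentially the paper's own proof: the forward direction sends $\uc$ to $V=\oplus_i V_i$ with $\Gamma$ acting by right multiplication, and the reverse direction recovers the components and arrows from the idempotents (the paper writes $V_i=V/\frak a_i V$ where you write $Ve_i$, which is the same Peirce decomposition, and defines $\gamma_{ij}$ by the action of $e_i\gamma_{ij}$ exactly as you do). Your added care about morphisms and about reading the algebra side as $A[\Gamma]$-modules rather than bare algebras only makes explicit what the paper's very brief object-level argument leaves implicit, so the proposal is correct and follows the same route.
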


\begin{proof}
Let $\uc=(|\uc|=\{V_i\}_1^r,\Gamma=\{\gamma_{ij}\}_{1\leq i,j\leq r})$ be a diagram. Then $V=\oplus_{i=1}^r V_i$ with the natural right action ($V$ considered as a diagonal matrix) is an $A[\Gamma]$-module. Conversely, an $A[\Gamma]$-module $V$ is also an $A^r$-module. Put $\frak a_i=\{(a_1,\dots,\hat a_i,\dots a_r)\}$, $1\leq i\leq r$. Then $V_i=V/\frak a_i V$ is an $A$-module, and we let the quiver morphism $\gamma_{ij}:V_i\rightarrow V_j$ be defined by $\gamma_{ij}(v_i)=v\cdot e_i\gamma_{ij}$, where $v$ is any element in the inverse image of $\kappa:V\rightarrow V_i$.
\end{proof}

\subsection{Deformation theory in $a_\Gamma$}

In this subsection we fix a diagram $\uc=(|\uc|=\{V_1,\dots,V_r\},\Gamma)$ and let $V$ be the corresponding $A[\Gamma]$-module. For a $k[\Gamma]$-algebra $S$ in $a_\Gamma$, a deformation of $V$ to $S$ is an $S\otimes_k A$-module $V_S$, flat over $S$, such that $k[\Gamma]\otimes_S V_S\cong V$ as $A[\Gamma]=k[\Gamma]\otimes_k A$-module. Two deformations are equivalent, $V_S\sim V_S^\prime$, if they are isomorphic over $S$, e.g. there exists an isomorphism $\iota:V_S\rightarrow V_S^\prime$ commuting with the induced isomorphism $k[\Gamma]\otimes_S V_S\cong k[\Gamma]\otimes_S V_S^\prime$, that is, the diagram $$\xymatrix{V_S\ar[dr]\ar[rr]^\iota&&V_S^\prime\ar[dl]\\&V&}$$ commutes.

\begin{lemma}
$V_S\in\Def^\Gamma_V(S)$ is $S$-flat if and only if $V_S\cong S\otimes_{k[\Gamma]} V$ as $S$-module.
\end{lemma}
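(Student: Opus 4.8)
The plan is to mimic the standard flatness criterion for deformations over local Artinian algebras, now relative to the base $k[\Gamma]$ instead of $k$ (or $k^r$). First I would fix a small surjection $0\to I\to S\xrightarrow{\pi} R\to 0$ in $a_\Gamma$ with $I\cdot\rad(S)=0$, so that $I$ is a $k[\Gamma]$-bimodule on which $\rad(S)$ acts trivially, and argue by induction on the length of $S$; the base case $S=k[\Gamma]$ is the normalization condition $k[\Gamma]\otimes_S V_S\cong V$ itself. For the inductive step I would tensor the short exact sequence with $V_S$ over $S$ and use the standard local criterion of flatness: $V_S$ is $S$-flat iff $V_R=R\otimes_S V_S$ is $R$-flat and $\operatorname{Tor}_1^S(k[\Gamma],V_S)=0$, the latter unwinding (since $I$ is killed by $\rad S$, hence is a $k[\Gamma]$-module) to the injectivity of the natural map $I\otimes_{k[\Gamma]} V \to V_S$. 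Granting the inductive hypothesis $V_R\cong R\otimes_{k[\Gamma]}V$, a diagram chase on
$$\xymatrix{0\ar[r]&I\otimes_{k[\Gamma]}V\ar[r]&V_S\ar[r]&V_R\ar[r]&0}$$
compared with $0\to I\otimes_{k[\Gamma]}V\to S\otimes_{k[\Gamma]}V\to R\otimes_{k[\Gamma]}V\to 0$ then forces $V_S\cong S\otimes_{k[\Gamma]}V$ as $S$-modules.

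For the converse direction, if $V_S\cong S\otimes_{k[\Gamma]}V$ as $S$-modules, I would show $S$-flatness directly: $S$ is free as a right $k[\Gamma]$-module (being an object of $a_\Gamma$, its underlying $k[\Gamma]$-module structure splits off the augmentation, and $V\cong k[\Gamma]\otimes_{k[\Gamma]}V$ is trivially $k[\Gamma]$-flat), so $S\otimes_{k[\Gamma]}V$ is a filtered colimit of frees over $S$, hence $S$-flat. Here the one genuinely new point compared with the $a_r$ case is that $k[\Gamma]$ is no longer semisimple — it has the arrows $\gamma_{ij}$ in its radical — so "flat over $k[\Gamma]$" is a real condition on $V$ rather than automatic. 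But $V$ arises from a diagram, i.e. $V = k[\Gamma]\otimes_{k[\Gamma]}V$ is literally the free $k[\Gamma]$-module functor applied to nothing; more precisely $V$ as an $A[\Gamma]$-module is induced up from the $A^r$-module $\oplus V_i$ along $k^r\hookrightarrow k[\Gamma]$, and one checks $k[\Gamma]$ is free over $k^r$, so $V$ is $k[\Gamma]$-flat. This is exactly the fact recorded just before the lemma, that deformation-flatness in the incidence case is $V_S\cong_S S\otimes_{k[\Gamma]}V$, and it is what makes the two-sided bar/Tor computation go through unchanged.

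The main obstacle I anticipate is bookkeeping rather than conceptual: one must be careful that all tensor products are over the correct ring and on the correct side (everything in sight is a right $A$-module and a right $k[\Gamma]$-module, with the two actions commuting, so $V_S$ is genuinely a bimodule and $\operatorname{Tor}_\bullet^S(k[\Gamma],-)$ must be computed respecting this), and that the identification $k[\Gamma]\otimes_S(S\otimes_{k[\Gamma]}V)\cong V$ is the prescribed one, i.e. compatible with the $A[\Gamma]$-structure and not merely an abstract isomorphism of $A$-modules. I would therefore phrase the induction in terms of liftings of the structure homomorphism $\sigma_S\colon A\to \End_k(V_S)$ exactly as in the discrete case treated earlier in the paper, where the obstruction lives in the matrix Hochschild group $(\HH^2(A,\Hom_k(V_i,V_j))\otimes_k I_{ij})$; the presence of $\Gamma$ only restricts which cochains are admissible (they must commute with the $\gamma_{ij}$), and does not affect the flatness equivalence, which is a statement about the underlying $S$-module. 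Once the $k[\Gamma]$-freeness of $S$ and $k[\Gamma]$-flatness of $V$ are in hand, the argument is the word-for-word transcription of the $a_r$ proof.
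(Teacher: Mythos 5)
The paper's own proof of this lemma is a one-line reduction: it simply asserts that the statement ``follows exactly as in the discrete situation,'' i.e.\ the induction over small surjections $0\to I\to S\to R\to 0$ used for $a_r$. Your proposal tries to actually carry that induction out, and you correctly flag the one genuinely new phenomenon --- $k[\Gamma]$ is not semisimple, so flatness over $k[\Gamma]$ is a real condition --- but the way you dispose of it contains a concrete error. The claim that $V$ ``is induced up from the $A^r$-module $\oplus_i V_i$ along $k^r\hookrightarrow k[\Gamma]$'' is false: the induced module $k[\Gamma]\otimes_{k^r}(\oplus_i V_i)$ acquires extra summands $\gamma_{ij}\otimes V_i$, one for each arrow, whereas the diagram module is $\oplus_i V_i$ with the prescribed action of the $\gamma_{ij}$; the two agree only when $\Gamma$ is trivial. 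Consequently $V$ need not be $k[\Gamma]$-flat. Already in the paper's own example $V_1=k[x]\overset{\gamma_{12}}\twoheadrightarrow k[x]/(x)=V_2$, the algebra $k[\Gamma]$ is the upper triangular $2\times 2$ algebra, a right module over it is a triple $(M_1,M_2,f\colon M_1\to M_2)$, and the flat right modules are exactly those with $f$ injective; the quotient map is not injective, so this $V$ is not flat over $k[\Gamma]$.

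This breaks your converse direction twice over: freeness of $S$ over $k[\Gamma]$ is not the relevant hypothesis (base change gives $S$-flatness of $S\otimes_{k[\Gamma]}V$ when $V$ is $k[\Gamma]$-flat, not when $S$ is $k[\Gamma]$-free), and the needed $k[\Gamma]$-flatness of $V$ fails in general --- already for $S=k[\Gamma]$ itself, where $V_S=V\cong k[\Gamma]\otimes_{k[\Gamma]}V$ holds trivially, honest $S$-flatness can fail. The forward direction has a related soft spot: in the discrete case the identification $V_S\cong S\otimes_{k^r}V$ is obtained by lifting bases, using that every $k^r$-module is projective; your appeal to the local criterion with residue ring $k[\Gamma]$ is exactly the point where non-semisimplicity bites and cannot be a word-for-word transcription. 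The paper sidesteps all of this: immediately before the lemma it \emph{defines} deformation flatness in the incidence situation by the condition $V_S\cong_S S\otimes_{k[\Gamma]}V$, and its proof is the single sentence quoted above. So either adopt that definitional reading (in which case there is essentially nothing to prove, and the flatness claims about $V$ should be dropped), or, if you want the lemma with genuine $S$-flatness, you need to replace the false induction step --- as written, your converse implication is unproved and, for general diagrams, untrue.
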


\begin{proof}
This follows exactly as in the discrete situation; for $R\in a_r$, $V_R\in\Def_V(R)$ is $R$-flat if and only if $V_R\cong R\otimes_{k^r} V$ as $R$-module.
\end{proof}

Thus, a deformation, or lifting of $V$ to $S$ is an $A$-module structure on $S\otimes_{k[\Gamma]}V$, commuting with the action of $S$ (and then the induced action of $k[\Gamma]$). Following step by step the discrete situation, this is, for every $a\in A$ to give an action morphism $\sigma_a:V\rightarrow S\otimes_{k[\Gamma]}V$, commuting with the $k[\Gamma]$-action. There is a $k^r$-morphism $\kappa:S\otimes_{k[\Gamma]}V\twoheadrightarrow S\otimes_{k^r}V$ given by $\kappa(\gamma_{ij}\otimes v)=1\otimes\gamma_{ij}v$, e.g., the $\Gamma$-action on $S\otimes_{k^r}V$ is right $\Gamma$ action on $V$. So this is equivalent to, for each $a\in A$, to give an action morphism $\sigma_a:V\rightarrow S\otimes_{k^r}V$ commuting with all $\gamma\in\Gamma$. The obstruction theory is then exactly as before, except that the cohomology controlling the deformations is $\Ext_A^\Gamma(V_i,V_j)$, the left derived functor of $\Hom_A^\Gamma(V,-)$.
Notice in particular that the test-algebra in the incidence situation is $$k^\Gamma[\varepsilon]=k[\Gamma]\otimes_{k^r}(t_{ij})/(t_{ij})^2.$$

As in the discrete situation, the obstruction calculus can be performed in the Hochschild cohomology; the $\sigma$'s give a homomorphism $$\sigma:A\rightarrow\Hom^\Gamma_A(A,\Hom_k(V_i,V_j))\otimes_kI\subseteq\Hom_A(A,\Hom_k(V_i,V_j))\otimes_kI$$ where the superscript $\Gamma$ denotes the subspace of morphisms commuting with all $\gamma\in\Gamma$, and $I$ is the kernel in a particular small morphism $\pi:S\twoheadrightarrow R.$ All computation are identical, we should only be sure they respects the action of $\Gamma$.

Experience proves that, in some situations, it is easier to work with free resolutions of modules. The computations in the Hochschild cohomology then translates as follows:

Choose resolutions $0\leftarrow V_i\rightarrow L_{.}^i.$ We can lift $\Gamma$ to the components in the respective projective resolutions, so that $L_i=\underset{j=1}{\overset r\oplus} L_i^j$ becomes an $A[\Gamma]$-module as well as $V=\underset{j=1}{\overset r\oplus} V_j$:

$$\xymatrix{0&V_i\ar[l]\ar[d]^{\gamma_{ij}}&L_0^i\ar[l]\ar[d]^{\gamma_{ij}}&
L_1^i\ar[l]_{d_0^i}\ar[d]^{\gamma_{ij}}&L_2^i\ar[l]_{d_1^i}\ar[d]^{\gamma_{ij}}&\cdots\ar[l]_{d_2^i}
\\0&V_j\ar[l]&L_0^j\ar[l]&L_1^j\ar[l]_{d_0^j}&L_2^j\ar[l]_{d_1^j}&\cdots\ar[l]_{d_2^j}}$$

In the discrete situation, we worked in the Yoneda complex $\Hom^._A(L.,L.)$, using the quasi isomorphism $$\iota:\Hom^._A(L.,L.)\rightarrow\Hom_A(L.,V).$$ Lifting the action of $\Gamma$ as above, we get the natural action of $\Gamma$ on  $\Hom^._A(L.,L.)$ and $\Hom_A(L.,V)$, giving us the possibility to consider $\Hom_A^\Gamma(L.,V)$ and $\Hom^{.,\Gamma}_A(L.,L.)$, the morphisms that are invariant under $\Gamma$. There is no reason why these two are quasi-isomorphic in general, so we have to take invariant cycles, construct obstructions in $H^2(\Hom^._A(L.,L.))\cong H^2(\Hom(L.,V))\cong\HH^2(A,\End_k(V))$, knowing that the resulting class is an invariant, that is an element in $${}^\Gamma\HH^2(A,\End_k(V))\subseteq\Ext^2_A(L.,V).$$

So we start choosing bases $\{(t_{ij}^{\ast}(l))\}\subset{}^\Gamma\Ext^1_A(V,V)$, we let the test algebra be $$S=k[\Gamma]\otimes_{k^r}(t_{ij}^\ast(l)),$$
and we do the computations exactly as before.

\section{Examples with incidences}

\subsection{A fixed line and a point}

We consider the following diagram of $A$-modules: $$A=k[x]=V_1\overset{\gamma_{12}}\rightarrow k[x]/(x)=k.$$ We then have the following resolutions
$$\xymatrix{0&V_1\ar[l]\ar[d]_{\gamma_{12}}&A\ar[l]\ar[d]&0\ar[l]\ar[d]&&\\0&V_2\ar[l]&A\ar[l]&A\ar[l]^{\cdot x=d_1}&0\ar[l]}$$ which immediately gives the following tangent spaces:
$\Ext^1_A(V_1,V_1)=\Ext^1_A(V_1,V_2)=0$, $\Ext^1_A(V_2,V_1)=\Ext^1_A(V_2,V_2)=k$. Thus we have $S_2=\left(\begin{matrix}k&0\\t_{21}&k[t_{22}]\end{matrix}\right)$.

To make things clear, the liftings to the second level is given by

$$\xymatrix{
0&\mathcal V_2\ar[l]\ar[d]&S_2\otimes_{k^2} L_0\ar[l]\ar[d]&S_2\otimes_{k^2} L_1\ar[l]_{\mathcal M_2}\ar[d]&0\ar[l]\\
0&\ar[l]{\left(\begin{matrix}V_1&0\\0&V_2\end{matrix}\right)}&\ar[l]
{\left(\begin{matrix}L_0^1&0\\0&L_0^2\end{matrix}\right)}&\ar[l]
{\left(\begin{matrix}L_1^1&0\\0&L_1^2\end{matrix}\right)}&\ar[l] 0}$$

with the universal family restricted to the tangent space given by $$\mathcal M_2=\left(\begin{matrix}1\otimes d_{11}^1&0\\t_{21}\otimes\xi_{21}&1\otimes d_{22}^1+t_{22}\otimes\xi_{22}\end{matrix}\right).$$

As we see, As $\Ext^2_A(V_i,V_j)=0$ all higher order Massey products are zero, and we are through immediately.

\subsection{Example: A line through the origin and a point on the line}

Inspired by the moduli (stack) of curves with fixed points, we consider the first step of the moduli problem of parametrizing homogeneous curves with $r$ fixed points:

Parameterize pairs $(L,p)$ where $L$ is a line through the origin in the plane and $p$ is a point on the line $L$.

We consider the plane $k[x,y]$, the $x$-axis $V_1=k[x,y]/(y)$, and the origin $V_2=k[x,y]/(x,y)$.

Inside the moduli of the pairs $(L,p)$, a line and a point, lies the moduli space of pairs $(L,p)$ with $p$ a point on the line $L$. This is to say algebraically that there is a homomorphism $\gamma:A(L)\rightarrow k(p)$, where $A(L)$ denotes the affine ring of $L$. For this subspace of moduli, we use the corresponding notations:

$$V_1=A/(y)\overset{\gamma_{12}}\rightarrow A/(x,y)=V_2,\text{}A=k[x,y].$$

We lift the quotient morphism, which is our incidence in this example, to the resolution $0\leftarrow V=V_1\oplus V_2\leftarrow L.=L^1.\oplus L^2.$ of $V$ according to the following

$$\xymatrix{0&\ar[d]_{\gamma_{12}}V_1\ar[l]&\ar[d]_{\Id}A\ar[l]&\ar[d]_{\left(\begin{matrix}0\\1\end{matrix}\right)}A\ar[l]_y&0\ar[l]\\0&V_2\ar[l]&A\ar[l]&A^2
\ar[l]^{\left(\begin{matrix}x&y\end{matrix}\right)}&A\ar[l]&0\ar[l]}$$

Then we start computing, taking the incidence into consideration. Let $\phi=\left(\begin{matrix}\phi_{11}&\phi_{21}\\
\phi_{12}&\phi_{22}\end{matrix}\right)\in\Ker(\Hom_A(L_1,V)\rightarrow\Hom_A(L_2,V))$. From the computations in the previous subsection, we then know

$$\begin{aligned}
\phi_{11}&=v\in V_1\\
\phi_{21}&=(v,0)\in V_1^2\\
\phi_{12}&=\alpha\in V_2\cong k\\
\phi_{22}&=(\alpha,\beta)\in V_2^2\cong k^2.
\end{aligned}$$

For $\phi$ to be invariant under the action of $\Gamma$, i.e. $\phi\in\Hom^\Gamma_A(L_1,V)$, the diagram
$$\xymatrix{L_1\ar[d]_{\phi}&L_1\ar[d]^{\phi}\ar[l]_{\gamma_{12}}\\V&V\ar[l]^{\gamma_{12}}}$$
must be commutative. We get
$\phi\circ\gamma_{12}=\gamma_{12}\circ\phi\Leftrightarrow$

$$\left(\begin{matrix}\phi_{11}&\phi_{21}\\
\phi_{12}&\phi_{22}\end{matrix}\right)\left(\begin{matrix}0&0\\
\gamma_{12}&0\end{matrix}\right)=\left(\begin{matrix}0&0\\\gamma_{12}&0\end{matrix}\right)\left(\begin{matrix}\phi_{11}&\phi_{21}\\
\phi_{12}&\phi_{22}\end{matrix}\right)\Leftrightarrow\left(\begin{matrix}\phi_{21}\circ\gamma_{12}&0\\
\phi_{22}\circ\gamma_{12}&0\end{matrix}\right)=\left(\begin{matrix}0&0\\
\gamma_{12}\circ\phi_{11}&\gamma_{12}\circ\phi_{12}\end{matrix}\right)$$
which gives the equations
$$
\begin{aligned} \phi_{21}\circ\gamma_{12}&=0\Leftrightarrow\left(\begin{matrix}v&0\end{matrix}\right)\left(\begin{matrix}0\\1\end{matrix}\right)=0,\\
\gamma_{12}\circ\phi_{21}&=0\Leftrightarrow\phi_{21}=(v,0),\text{}v\in(x)\subset V_1,\\
\phi_{22}\circ\gamma_{12}&=\gamma_{12}\circ\phi_{11}\Leftrightarrow
\left(\begin{matrix}\alpha&\beta\end{matrix}\right)\left(\begin{matrix}0\\1\end{matrix}\right)=
0\Leftrightarrow \phi_{22}=(\alpha,0),\text{}\alpha\in k.\end{aligned}$$

We want to divide out by $\Ima(\Hom^\Gamma_A(L_0,V)\rightarrow\Hom^\Gamma_A(L_1,V)).$ For this, there is only one point of interest;

for $\psi=\left(\begin{matrix}\psi_{11}&\psi_{21}\\\psi_{12}&\psi_{22}\end{matrix}\right)$, we have that $d\psi=\left(\begin{matrix}\psi_{11}&\psi_{21}\\\psi_{12}&\psi_{22}\end{matrix}\right)\left(\begin{matrix}d_1&0\\0&d_2\end{matrix}\right)
=\left(\begin{matrix}0&(x,0)\psi_{21}\\0&0\end{matrix}\right)$.

An element $\psi=\left(\begin{matrix}\psi_{12}&\psi_{21}\\\psi_{12}&\psi_{22}\end{matrix}\right)$ is invariant if and only if $\psi\circ\gamma_{12}=\gamma_{12}\circ\psi\Leftrightarrow$

$$\begin{aligned}\left(\begin{matrix}\psi_{11}&\psi_{21}\\\psi_{12}&\psi_{22}\end{matrix}\right)
\left(\begin{matrix}0&0\\\gamma_{12}&0\end{matrix}\right)&=
\left(\begin{matrix}0&0\\\gamma_{12}&0\end{matrix}\right)
\left(\begin{matrix}\psi_{11}&\psi_{21}\\\psi_{12}&\psi_{22}\end{matrix}\right)\Leftrightarrow\\
\left(\begin{matrix}\psi_{21}&0\\\psi_{22}&0\end{matrix}\right)&=\left(\begin{matrix}0&0\\\gamma_{12}\circ\psi_{11}&\gamma_{12}\circ\psi_{21}\end{matrix}\right)
\Leftrightarrow\psi_{21}=0,\end{aligned}$$ Implying that the invariant image is the zero space.

This means that there is additional deformations in the case with incidences, and $\Ext^1_A(V_1,V_1)^\Gamma$ is infinite dimensional. Now, we choose a basis for the tangent space, contained in the case with incidences (notice that $\phi_{21}$ is killed by $\Hom_A(L_0,V)$ forgetting the incidences), i.e., we choose the following, invariant tangent space:

$$T_A^\Gamma=\{\phi|\phi=\left(\begin{matrix}\alpha_1 x&0\\\alpha_2&{\left(\begin{matrix}\alpha_3&0\end{matrix}\right)}
\end{matrix}\right),\text{}\alpha_i\in k, 1\leq i\leq3\}.$$

The Yoneda representations are given by the following diagrams:

$$\xymatrix{0&V_1\ar[l]&A\ar[l]&A\ar[l]\ar[dll]_x\ar[dl]_x&0\ar[l]\\0&V_1\ar[l]&A\ar[l]&A\ar[l]&0\ar[l]}$$

$$\xymatrix{0&V_1\ar[l]&A\ar[l]&A\ar[l]\ar[dll]_1\ar[dl]_1&0\ar[l]\\0&V_2\ar[l]&A\ar[l]&A^2\ar[l]&A\ar[l]&0\ar[l]}$$

$$\xymatrix{0&V_2\ar[l]&A\ar[l]&A^2\ar[dl]_{\begin{tiny}\left(\begin{matrix}1&0\end{matrix}\right)\end{tiny}}\ar[l]&
A\ar[l]_{\left(\begin{matrix}y\\-x\end{matrix}\right)}\ar[dl]^{\begin{tiny}\left(\begin{matrix}0\\-1\end{matrix}\right)\end{tiny}}&0\ar[l]\\
0&V_2\ar[l]&A\ar[l]&A^2\ar[l]^{\left(\begin{matrix}x&y\end{matrix}\right)}&
A\ar[l]&0\ar[l]}$$

So the 2. order Massey products, the cup products, are

$$\begin{aligned}
\langle t_{11}^2\rangle&=0,\text{}\langle t_{12}t_{22}\rangle=\left(\begin{matrix}0\\-1\end{matrix}\right):L_2^1\rightarrow L_1^2=A\in\Hom_A(L_2^1,V_1),\text{}\\
\langle t_{22}^2\rangle=0.
\end{aligned}$$

Because the only only nonzero element is also necessarily nonzero in cohomology, this means that we end up with the following:

\begin{proposition} The moduli space of the pair $(L,p)$ with $p$ a point on the line $L$, inside the discrete moduli, is
$$\left(\begin{matrix}k[t_{11}]&t_{12}\\0&k[t_{22}(1)]\end{matrix}\right)/(t_{12}t_{22}).$$
\end{proposition}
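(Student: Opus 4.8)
The plan is to run the deformation-theoretic machinery of Section 6 in the Yoneda complex, exactly as set up in the preceding subsection, for the diagram $V_1 = A/(y) \overset{\gamma_{12}}{\to} A/(x,y) = V_2$ with $A = k[x,y]$, restricted to the sub-moduli where we only deform the line into other lines through the origin. First I would record the $\Gamma$-invariant tangent space. The excerpt has already computed that the commutation constraint $\phi\circ\gamma_{12} = \gamma_{12}\circ\phi$ kills $\phi_{21}$ on the nose and forces $\phi_{22} = (\alpha,0)$, while the invariant image of $\Hom_A^\Gamma(L_0,V)\to\Hom_A^\Gamma(L_1,V)$ is zero; so the honest tangent space ${}^\Gamma\Ext^1_A(V,V)$ is large (in particular ${}^\Gamma\Ext^1_A(V_1,V_1)$ is infinite-dimensional because the usual coboundary that trivialized it is no longer $\Gamma$-invariant). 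As in the discrete (line, point) example, we are only interested in deforming $L$ among lines, so we restrict to the three-dimensional invariant subspace $T_A^\Gamma = \{\phi = \left(\begin{smallmatrix}\alpha_1 x & 0\\ \alpha_2 & (\alpha_3\ 0)\end{smallmatrix}\right)\}$ displayed above, giving generators $t_{11}$ (direction $x$ in the $(1,1)$-slot), $t_{12}$, and $t_{22}$; hence the test algebra and the free $k[\Gamma]$-algebra with the right $H/\rad^2$ is $S = \left(\begin{smallmatrix} k\langle t_{11}\rangle & t_{12}\\ 0 & k\langle t_{22}\rangle\end{smallmatrix}\right)$, where the lower-left entry is $0$ because $k[\Gamma]$ is upper triangular for this quiver.

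Next I would write down the Yoneda lifts of these three tangent directions — these are the three diagrams drawn just before the proposition — and compute all second-order Massey products (cup products) $\langle t_{11}^2\rangle$, $\langle t_{11}t_{12}\rangle$, $\langle t_{12}t_{22}\rangle$, $\langle t_{22}^2\rangle$, and any other composable pairs. The claim, read off from the diagrams, is that every one of these vanishes except $\langle t_{12}t_{22}\rangle$, which equals $\left(\begin{smallmatrix}0\\-1\end{smallmatrix}\right): L_2^1 \to L_1^2 = A$, a nonzero element of $\Hom_A(L_2^1, V_1)$ — more precisely a nonzero class in ${}^\Gamma\Ext^2$. Because this class is nonzero in cohomology (there is nothing to kill it), it must appear as a relation in the versal base: it forces the relation $t_{12}t_{22}$ (up to scalar) in $\mathcal O_{\uc}$. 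Then I would check that there are no higher-order Massey products to compute: once $t_{12}t_{22}$ is imposed, the only words one could still form from $t_{11}, t_{12}, t_{22}$ respecting the quiver shape are powers of $t_{11}$, the single path $t_{12}$, powers $t_{22}^n$, and paths $t_{11}^a t_{12} t_{22}^b$; the surviving ones beyond degree $2$ all produce trivial cohomology classes (this is the analogue of the $3$rd-order check $\langle t_{21}t_{11}t_{11}\rangle$ etc. in the discrete example, but simpler here), so the obstruction series terminates.

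Assembling, the versal base space is the quotient of the free object $S$ by the ideal generated by the single nonzero obstruction class $r(\underline t) = t_{12}t_{22}$, i.e.
$$\left(\begin{matrix}k[t_{11}] & t_{12}\\ 0 & k[t_{22}]\end{matrix}\right)/(t_{12}t_{22}),$$
which is the stated local formal moduli. Finally I would sanity-check the geometry: setting $t_{12}=0$ the diagonal gives the line of slope $t_{11}$ in slot $(1,1)$ and the point $(0, -t_{22})$ on the $x$-axis in slot $(2,2)$ — exactly the $1$-parameter family of points on $L$ together with the $1$-parameter family of lines — and the relation $t_{12}t_{22} = 0$ says that the incidence morphism $\gamma_{12}$ persists precisely when the point stays at the origin direction forced by the fixed line, i.e. the $\Ext$-dimension between $V_1$ and $V_2$ is correct exactly for a point on the line. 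The main obstacle is the honest bookkeeping of the Yoneda lifts under the $\Gamma$-commutation constraint: one must verify that the lifts of $t_{12}$ and $t_{22}$ chosen in the diagrams really are $\Gamma$-invariant representatives (so that their composite lands in ${}^\Gamma\HH^2$), that the cup product $\langle t_{12}t_{22}\rangle$ is genuinely nonzero in the invariant cohomology rather than a $\Gamma$-invariant coboundary, and that no choice of invariant nullhomotopy at a later stage can remove it — everything else is the routine matrix arithmetic that the excerpt has already set up.
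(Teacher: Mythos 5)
Your proposal follows essentially the same route as the paper: restrict to the chosen $\Gamma$-invariant tangent space $T_A^\Gamma$ with generators $t_{11}$, $t_{12}$, $t_{22}$, lift them in the Yoneda complex compatibly with $\gamma_{12}$, find that the only nonvanishing cup product is $\langle t_{12}t_{22}\rangle=\left(\begin{smallmatrix}0\\-1\end{smallmatrix}\right)$, which is nonzero in (invariant) cohomology and hence gives the single relation $t_{12}t_{22}$, with no higher Massey products surviving. The argument is correct as the paper intends it; the only quibble is in your geometric sanity check, where the deformed point should move along the line (the paper reads it as $(t_{22},t_{11}t_{22})$) rather than as $(0,-t_{22})$, which does not affect the main computation.
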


The geometric interpretation of this is the set of lines with slope $t_{11}$, and the point $(t_{22},t_{11}t_{22})$. The relation just tells that the point has to move along the line.


\begin{thebibliography}{}


\bibitem{Eriksen03}
E. Eriksen.
An Introduction to non-commutative Deformations of Modules. Lect. Notes Pure Appl. Math., {\bf 243} (2005), 90 - 126.



\bibitem{Hartshorne77}
R. Hartshorne.
Algebraic geometry
Graduate Texts in Mathematics, No. 52
New York
1977
ISBN 0-387-90244-9



\bibitem{Laudal03}
O. A. Laudal.
non-commutative Algebraic Geometry.
Rev. Mat. Iberoamericana, {\bf 19 (2)} (2003), 509 - 580.


\bibitem{Schlessinger68}
M. Schlessinger.
Functors of Artin rings.
Trans. Amer. Math. Soc., {\bf 130} (1968), 208 - 222.



\bibitem{Siqveland111}
A. Siqveland.
Geometry of noncommutative $k$-algebras.
J. Gen. Lie Theory Appl., {\bf 5} (2011), 1 - 12.








\end{thebibliography}
\end{document}